\ifdef{\crop}{%
\usepackage[includeheadfoot,twoside=False,paperwidth=448pt,paperheight=587pt,rmargin=15pt,lmargin=15pt,tmargin=15pt,bmargin=15pt]{geometry}%
}{%
\setlength{\topmargin}{22mm}
\addtolength{\topmargin}{-1in}
\setlength{\oddsidemargin}{27mm}
\addtolength{\oddsidemargin}{-1in}
\setlength{\evensidemargin}{27mm}
\addtolength{\evensidemargin}{-1in}
\setlength{\textwidth}{156mm}
\setlength{\textheight}{240mm}
}%
\newcolumntype{C}{>{\centering\arraybackslash}X} 
\theoremstyle{plain}
\newtheorem{thm}{Theorem}[section]
\newtheorem*{thm*}{Theorem}
\newaliascnt{prop}{thm}
\newaliascnt{cor}{thm}
\newaliascnt{lem}{thm}
\newaliascnt{claim}{thm}
\newaliascnt{defn}{thm}
\newaliascnt{ques}{thm}
\newaliascnt{conj}{thm}
\newaliascnt{fact}{thm}
\newaliascnt{rem}{thm}
\newaliascnt{ex}{thm}
\newaliascnt{sett}{thm}
\newtheorem{prop}[prop]{Proposition}
\newtheorem{lem}[lem]{Lemma}
\newtheorem*{prop*}{Proposition}
\newtheorem*{cor*}{Corollary}
\newtheorem*{lem*}{Lemma}
\newtheorem*{claim*}{Claim}
\theoremstyle{definition}
\newtheorem*{defn*}{Definition}
\newtheorem*{ques*}{Question}
\newtheorem*{conj*}{Conjecture}
\newtheorem*{prob*}{Problem}
\newtheorem{rem}[rem]{Remark}
\newtheorem*{fact*}{Fact}
\newtheorem*{rem*}{Remark}
\newtheorem*{ex*}{Example}
\def\textsectionN~{\textsection{}}
\renewcommand\phi{\varphi}
\renewcommand\epsilon{\varepsilon}
\renewcommand\leq{\leqslant}
\renewcommand\geq{\geqslant}
\newcommand{\set}{  \@ifstar{\@setstar}{\@set}}\newcommand{\@setstar}[2]{\{\, #1 \, ; \,  #2 \,\}}
\newcommand{\@set}[1]{\{ #1 \}}
\newcommand{\trans}[1][1]{\raisebox{#1ex}{\scriptsize\kern0.1em$t$\kern-0.1em}}
\newcommand{\A}{\mathbb{A}}
\DeclareMathOperator{\codim}{codim}
\DeclareMathOperator{\Hom}{Hom}
\DeclareMathOperator{\Proj}{Proj}
\DeclareMathOperator{\Spec}{Spec}
\newcommand{\Gr}{\mathrm{Gr}}
\DeclareMathOperator{\Pic}{Pic}
\DeclareMathOperator{\Cl}{Cl}
\DeclareMathOperator{\Supp}{Supp}
\DeclareMathOperator{\rank}{rank}
\DeclareMathOperator{\id}{id}
\DeclareMathOperator{\Quot}{Quot}
\DeclareMathOperator{\Sym}{Sym}
\DeclareMathOperator{\pr}{pr}
\DeclareMathOperator{\Div}{div}
\def\Z{\mathbb{Z}}
\def\Q{\mathbb{Q}}
\def\R{\mathbb{R}}
\def\C{\mathbb{C}}
\def\A{\mathbb{A}}
\def\r+{\mathbb{R}_{\geq 0}}
\def\ep{\varepsilon}
\def\r+{{\R}_{\geq 0}}
\def\q+{{\Q}_{\geq 0}}
\def\P{\mathbb{P}}
\def\*c{\C^{\times}}
\def\fm{\mathfrak{m}}
\def\<{\langle}
\def\>{\rangle}
\def\op1{\calo_{\P^1}}
\def\A{\mathbb {A}}
\def\C{\mathbb {C}}
\def\Q{\mathbb {Q}}
\def\R{\mathbb {R}}
\def\Z{\mathbb {Z}}
\newcommand{\cala}{\mathcal {A}}
\newcommand{\calb}{\mathcal {B}}
\newcommand{\calc}{\mathcal {C}}
\newcommand{\cale}{\mathcal {E}}
\newcommand{\calf}{\mathcal {F}}
\newcommand{\calo}{\mathcal {O}}
\newcommand{\calq}{\mathcal {Q}}
\newcommand{\calv}{\mathcal {V}}
\title
[A remark on some punctual Quot schemes on smooth projective curves]
{A remark on some punctual Quot schemes on smooth projective curves}
\author[A.~Ito]{Atsushi~Ito}
\address{Department of Mathematics, Institute of Pure and Applied Sciences, University of Tsukuba, Tsukuba, Ibaraki 305-8571, Japan}
\email{ito-atsushi@math.tsukuba.ac.jp}
\subjclass[2020]{14H60, 14E05}
\keywords{Punctual Quot scheme, Quot-to-Chow morphism}
\begin{document}

\maketitle

\begin{abstract}
For a locally free sheaf $\mathcal{E}$ on a smooth projective curve,
we can define the punctual Quot scheme which parametrizes torsion quotients of $\mathcal{E}$ of length $n$ supported at a fixed point.
It is known that the punctual Quot scheme is a normal projective variety with canonical Gorenstein singularities.
In this note, we show that the punctual Quot scheme is a $\Q$-factorial Fano variety of Picard number one.
\end{abstract}

\section{Introduction}

Throughout this paper, we work over an algebraically closed field $k$ of any characteristic.
For a locally free sheaf $\cale$ of rank $r$ on a smooth projective curve $C$ and $n \geq 0$,
let $\Quot_C^n (\cale)$ be the Quot scheme which parametrizes torsion quotients of $\cale$ of length $n$.
It is known that $\Quot_C^n (\cale)$ is a smooth projective variety of dimension $nr$ (see \cite[Lemma 2.2, Corollary 4.7]{MR4063954} for instance). 
We can define the Quot-to-Chow morphism
\begin{align}\label{eq_quot-to-chow}
\pi : \Quot_C^n (\cale) \to \Sym^n C  
\end{align}
sending the quotient $[\cale \twoheadrightarrow \calq]$ to the effective divisor on $C$ determined by the torsion sheaf $\calq$.
For $q \in C$, the \emph{punctual Quot scheme} $\Quot_C^n (\cale)_q$ is defined to be the scheme-theoretic fiber of $\pi$ over $nq \in  \Sym^n C  $.
Recently, the fibers of $\pi$ are studied by many authors. For example, the following are known:

\begin{itemize}
\setlength{\itemsep}{0mm} 
\item (\cite[\S 2.1]{MR4175444}) 
The isomorphism class of $\Quot_C^n (\cale)_q$ depends only on $r$ and $n$.
In particular, it is independent of $C$ and $q$.
\item (\cite[Lemma 6.5]{MR4124833}, \cite[Theorem 1.2]{Birkar:2024aa}) 
The fiber of $\pi$ over $\sum_{i=1}^l m_i q_i \in  \Sym^n C   $ with $q_i \neq q_j  \ (i \neq j)$  is isomorphic to the product $\prod_{i=1}^l \Quot_C^{m_i} (\cale)_{q_i} $. 
\item (\cite[Corollary 6.6]{MR4124833}, \cite[Theorem 1.2]{Birkar:2024aa}) $\Quot_C^n (\cale)_q$ is a normal projective variety of dimension $n(r-1)$ with Cartier canonical divisor.
\item (\cite[\S 4,5]{MR4124833}) $\Quot_C^n (\cale)_q$ is birational to $\P^{n(r-1)}$.
\item (\cite[Lemma 6.2]{MR4742855}) $\Quot_C^n (\cale)_q$ has a crepant resolution.
In particular, $\Quot_C^n (\cale)_q$ has canonical Gorenstein singularities.
\end{itemize}

If $r=1$, \ref{eq_quot-to-chow} is an isomorphism and hence $\Quot_C^n (\cale)_q$ is a point.
If $n=1$, \ref{eq_quot-to-chow} coincides with the $\P^1$-bundle $\P_C(\cale) \to C$ 
and hence  $\Quot_C^1 (\cale)_q =\P(\cale \otimes k(q)) \simeq \P^{r-1}$.

In \cite{Birkar:2024aa},
the authors investigate the geometry of $\Quot_C^n (\cale)_q $ for $r=2$ in detail.
In particular,
they prove that $\Quot_C^n (\calo_C^{\oplus 2})_q$ is 
\begin{itemize}
\item $\P^1$ if $n=1$, 
\item  a singular quadric in $\P^3$ if $n=2$, 
\item a normal $\Q$-factorial Fano $3$-fold of Picard number one with canonical singularities along a copy of $\P^1$  if $n=3$
\end{itemize}
in characteristic zero \cite[Theorems 1.3, 1.4, 1.5]{Birkar:2024aa}.
The purpose of this note is to show a similar statement for any $n,r$ as follows.

\begin{thm}\label{thm_Picard_number_one}
Let $\cale$ be  a locally free sheaf on a smooth projective curve $C$ of rank $r \geq 2$ and $q \in C$.
For $n\geq 1$, the following hold.
\begin{enumerate}
\item 
$\Quot_C^n (\cale)_q$ is a normal $\Q$-factorial Fano $n(r-1)$-fold of Picard number one.
\item 
There exists an embedding $ \Quot_C^n (\cale)_q \hookrightarrow \Gr(nr,n) $ to a Grassmannian 
such that $\calo(1) \coloneqq\calo_{\Gr(nr,n) }(1)|_{\Quot_C^n (\cale)_q} $ is the ample generator of the Picard group $ \Pic (\Quot_C^n (\cale)_q) \simeq \Z$.
\item The Fano index of $ \Quot_C^n (\cale)_q$ is $r$, that is, $K_{\Quot_C^n (\cale)_q} = \calo(-r) $.
\item If $n \geq 2$, the singular locus of $ \Quot_C^n (\cale)_q$ is irreducible of codimension two in $ \Quot_C^n (\cale)_q$.
\item There exists a prime divisor $H \subset \Quot_C^n (\cale)_q$ such that the divisor class group $\Cl (\Quot_C^n (\cale)_q )$ is generated by the class $[H]$ and 
$n H \sim \calo(1)  $.
\end{enumerate}
\end{thm}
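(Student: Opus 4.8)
By the first property recalled in the introduction I may assume $\cale = \calo_C^{\oplus r}$. A length-$n$ torsion quotient of $\cale$ supported at $q$ is annihilated by $\fm_q^n$, hence is an $A$-module quotient of $V \coloneqq \cale_q \otimes_{\calo_{C,q}} A \cong A^{\oplus r}$, where $A \coloneqq \calo_{C,q}/\fm_q^n \cong k[t]/(t^n)$. This identifies $Q \coloneqq \Quot_C^n(\cale)_q$ with the scheme of $A$-module quotients $[V \twoheadrightarrow W]$, $\dim_k W = n$, a closed subscheme of the Grassmannian $\Gr(nr,n)$ of $n$-dimensional quotient spaces of $V$ cut out by $t$-stability of the kernel; the Plücker bundle restricts to $\calo(1) = \det(\calq)|_Q$ for the universal quotient $\calq$, which is the embedding asserted in (2). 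The connected group $G \coloneqq \Aut_A(V) = \GL_r(A)$ acts on $Q$ with orbits $\Omega_\lambda$ indexed by partitions $\lambda \vdash n$ of length $\leq r$, according to $W \cong \bigoplus_i A/(t^{\lambda_i})$. From $\dim_k \Aut_A(W) = \dim_k \End_A(W) = \sum_{i,j}\min(\lambda_i,\lambda_j)$ one gets $\codim_Q \Omega_\lambda = 2\sum_{i<j}\min(\lambda_i,\lambda_j)$. Thus $\Omega_{(n)}$ (cyclic quotients) is the unique dense orbit, every other orbit has \emph{even} codimension $\geq 2$, and $\Omega_{(n-1,1)}$ is the unique orbit of codimension $2$.

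\textbf{Class group, canonical class, $\Q$-factoriality and Fano (parts of (1), (3), (5)).} Since $Q$ is normal (recalled above) and the complement $Q\setminus\Omega_{(n)}$ of the dense orbit has codimension $\geq 2$, restriction gives $\Cl(Q) \cong \Cl(\Omega_{(n)})$. A cyclic quotient is an $r$-tuple of generators of $A$ up to $\Aut_A(A) = A^\times$, so $\Omega_{(n)} = U/A^\times$ with $U = A^{\oplus r}\setminus (tA)^{\oplus r}$; the complement has codimension $r \geq 2$, so $\Cl(U) = 0$ and $\calo(U)^\times = k^\times$, and the scaling action is free. Descent of divisor classes along this free quotient by the connected group $A^\times$ gives $\Cl(Q) \cong \widehat{A^\times} \cong \Z$, generated by $\chi(g) = g \bmod tA$; writing $H \subset Q$ for the associated semiinvariant prime divisor (the locus where $\phi(e_1) \in tW$) yields $\Cl(Q) = \Z[H]$. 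Since multiplication by $g \in A^\times$ on $W \cong A$ has determinant $\chi(g)^n$, the linearization of $\calo(1) = \det(\calq)$ is $\chi^n$, i.e.\ $\calo(1) = n[H]$, which is (5); in particular $n[H]$ is Cartier, so $[H]$ is $\Q$-Cartier and $Q$ is $\Q$-factorial. The same bookkeeping on $U \subset \A^{nr}$, where $K_U$ carries the linearization $\chi^{-nr}$ (the determinant of the scaling representation, the adjoint correction being trivial as $A^\times$ is abelian), gives $K_Q = -nr[H] = -r\,\calo(1) = \calo(-r)$, which is (3); as $\calo(1)$ is ample, $Q$ is Fano.

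\textbf{Picard number, the generator (rest of (1), (2)) and the singular locus (4).} From $\Pic(Q) \hookrightarrow \Cl(Q) \cong \Z$ and $\calo(1) \neq 0$ we get $\Pic(Q) \cong \Z$, so the Picard number is one. To identify the generator I exhibit a line on $Q$: with $N_1 = Ae_1 \oplus \langle t^{n-1}e_2\rangle \oplus \bigoplus_{i\geq 3} Ae_i$ and $N_0 = tAe_1 \oplus \bigoplus_{i\geq 3} Ae_i$ one has $tN_1 \subseteq N_0$ and $\dim_k N_1/N_0 = 2$, so every intermediate subspace is $t$-stable and the pencil $\{N_0 \subseteq N \subseteq N_1\}$ is a line $\ell \cong \P^1 \subset Q$ with $\calo(1)\cdot\ell = 1$; the pairing $\call \mapsto \call\cdot\ell$ then forces $\calo(1)$ to generate $\Pic(Q)$, giving (2). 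For (4), the Quot-scheme tangent space at $[V \twoheadrightarrow W]$ with kernel $K$ is $\Hom_A(K,W)$; the $\min$-formula gives $\dim_k\Hom_A(K,W) = n(r-1) = \dim Q$ on $\Omega_{(n)}$ (so $Q$ is smooth there) and $= n(r-1)+2$ on $\Omega_{(n-1,1)}$ (so $Q$ is singular there). As $\Sing(Q)$ is closed, $\Omega_{(n)}$ is smooth, and $\overline{\Omega_{(n-1,1)}} = Q\setminus\Omega_{(n)}$ is the union of all non-open orbits (the orbit closures being governed by the dominance order on partitions), we conclude $\Sing(Q) = \overline{\Omega_{(n-1,1)}}$, irreducible of codimension $2$.

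\textbf{Main obstacle.} The linchpin is the second paragraph: identifying $\Cl(Q)$ with $\widehat{A^\times} \cong \Z$ via the free quotient $\Omega_{(n)} = U/A^\times$ and pinning down the generator so that $\calo(1) = n[H]$. This single computation simultaneously delivers the class group (5), $\Q$-factoriality, and---through the parallel linearization computation for $K_U$---the Fano index (3); the delicate points are verifying the hypotheses for divisor-class descent (freeness, $\calo(U)^\times = k^\times$, $\Cl(U)=0$) and tracking the characters $\chi^n$ and $\chi^{-nr}$ with the correct exponents. The remaining inputs---the orbit-dimension and $\Hom_A(K,W)$ computations and the explicit line---are elementary linear algebra over $A$.
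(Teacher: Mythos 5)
Your treatment of (1), (2), (3) and (5) is essentially correct but follows a genuinely different route from the paper. The paper proves (1)--(3) by induction on $n$: it identifies $F_{n,n+1}$ with the $\P^{r-1}$-bundle $\P_{F_n}(\overline{\mathscr{A}_n/\fm \mathscr{A}_n})$ (\autoref{lem_F_n,n+1=P}), shows $\pr_{n+1}\colon F_{n,n+1}\to F_{n+1}$ is a crepant divisorial contraction (\autoref{lem_fiber_F_n,n+1_to_F_n+1}, \autoref{prop_Picard_number}), and transports $\Q$-factoriality, the Picard group and $K=\calo(-r)$ up the induction; (5) is then proved separately with explicit affine charts and Pl\"ucker coordinates (\autoref{prop_div_class_group}). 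You instead stratify $Q$ by $\GL_r(A)$-orbits $\Omega_\lambda$, compute $\Cl(Q)\simeq \Cl(\Omega_{(n)})\simeq\widehat{A^\times}\simeq\Z$ by descent along the $A^\times$-torsor $U\to\Omega_{(n)}$ over the dense orbit (legitimate, since the complement of $\Omega_{(n)}$ has codimension $\geq 2$, $\Cl(U)=0$, $\calo(U)^\times=k^\times$, and the action is free), read off $\calo(1)=n[H]$ and $K_Q=-nr[H]$ from the characters $\chi^n$ and $\chi^{-nr}$, and pin down the generator of $\Pic(Q)$ with a line $\ell\subset Q\subset\Gr(nr,n)$ of Pl\"ucker degree one. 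This avoids the inductive bundle construction entirely and yields (3) and (5) simultaneously; the price is reliance on standard but nontrivial inputs (transitivity of $\GL_r(A)$ on surjections onto a fixed $A$-module, the descent sequence for torsors under connected groups, vanishing of characters of unipotent groups), all of which do hold here, and your codimension formula $\codim_Q\Omega_\lambda=2\sum_{i<j}\min(\lambda_i,\lambda_j)$ checks out.

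Part (4), however, has a genuine gap, and it originates in your opening identification: $Q=\Quot_C^n(\cale)_q$ is \emph{not} the scheme of $A$-module quotients of $V$ cut out in $\Gr(nr,n)$ by $t$-stability of the kernel; by \autoref{lem_embedding_to_Grassmannian} it is only the \emph{reduction} of that scheme, namely of $\Quot_C^n(\cale/\fm^n\cale)$, and \autoref{rem_non-reduced} shows the two genuinely differ: for $r=n=2$ (and $\chara k\neq 2$) the Quot scheme is non-reduced precisely at a point of $\Omega_{(1,1)}=\Omega_{(n-1,1)}$. Consequently $\Hom_A(K,W)$ is the tangent space of the ambient, possibly non-reduced, Quot scheme and only bounds $\dim T_xQ$ from above. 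Your inference on $\Omega_{(n)}$ is fine (there the upper bound equals $\dim Q$, forcing smoothness of $Q$), but on $\Omega_{(n-1,1)}$ the step ``$\dim_k\Hom_A(K,W)=n(r-1)+2$, hence $Q$ is singular'' is invalid: the reduced scheme could a priori still have tangent dimension $n(r-1)$ there. The case $r=n=2$ shows the discrepancy is real: $Q$ is the quadric cone in $\P^3$, whose tangent space at the vertex is $3$-dimensional, while $\Hom_A(K,W)$ is $4$-dimensional, so your number is genuinely not $\dim T_xQ$. You need a different argument for singularity along $\Omega_{(n-1,1)}$: for instance the paper's (the exceptional divisor of $\pr_n\colon F_{n-1,n}\to F_n$ is crepant over $Z=Q\setminus\Omega_{(n)}$, and discrepancies of exceptional divisors over smooth points are strictly positive), or a direct computation of local equations of the \emph{reduced} scheme along that stratum. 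Note also that your irreducibility claim for $\Sing(Q)$ rests on the uncited fact that orbit closures follow the dominance order (so that $Q\setminus\Omega_{(n)}=\overline{\Omega_{(n-1,1)}}$); the paper gets irreducibility for free, $Z$ being the image of the irreducible exceptional divisor of a divisorial contraction.
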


The idea of the proof essentially follows from \cite{MR4124833},
where the authors construct a resolution of $\Quot_C^n (\cale)_q$ as an iterated $\P^{r-1}$-bundle.
Following their construction,
we define a $\P^{r-1}$-bundle $f_n : \P_{\Quot_C^n (\cale)_q}(\calf) \to \Quot_C^n (\cale)_q$
and a divisorial contraction $\mu_{n+1} : \P_{\Quot_C^n (\cale)_q}(\calf) \to \Quot_C^{n+1} (\cale)_q$.
Then we can prove the theorem by the induction on $n$.

\vspace{2mm}
This paper is organized as follows. In  \S \ref{sec_preliminaries}, we recall some notation and give an embedding of $ \Quot_C^{n} (\cale)_q$ to  a Grassmannian. 
In \S \ref{sec_picard_group} and \S \ref{sec_div_class/group}, we investigate the Picard group and  the divisor class group of $\Quot_C^n (\cale)_q $ respectively.
In \S \ref{sec_r=2}, we give a description of the exceptional divisor of the divisorial contraction $\mu_{n+1} : \P_{\Quot_C^n (\cale)_q}(\calf) \to \Quot_C^{n+1} (\cale)_q$ for $r=2$.

\subsection*{Acknowledgments}
The author was supported by JSPS KAKENHI Grant Number 
21K03201.

\section{Embedding to a Grassmannian}\label{sec_preliminaries}

For a $k$-vector space $E$,
$\Gr(E,s)$ (resp.\ $\Gr(s,E)$) 
denotes the Grassmannian of $s$-dimensional quotients (resp.\ subspaces) 
of $E$.
More generally, for a locally free sheaf $\cale$ on a variety $X$,
$\Gr_X(\cale,s)$ (resp.\ $\Gr_X(s,\cale)$) 
denotes the Grassmannian bundle
which parametrizes quotient bundles  (resp.\  subbundles)  of $\varphi^* \cale$ of rank $s$
for each $\varphi : T \to X$.
We use the notation $\P(E) \coloneqq \Gr(E,1) =\Proj \Sym E$ and $\P_X(\cale) \coloneqq \Gr_X(\cale,1) = \Proj_X \Sym \cale$.

For a coherent sheaf of $\calf$ on $X$,
$\Quot_X^n (\calf)$ denotes the Quot scheme  which parametrizes quotients of $\calf$ with zero-dimensional supports of degree $n$.
The point in $\Quot_X^n (\calf)$ corresponding to an exact sequence $0 \to \cala \to \calf \to \calb \to 0$ on $C$ is denoted by
$[\cala \hookrightarrow \calf]$ or $[\calf \twoheadrightarrow \calb]$.
If the context is clear, we write it as $[\cala]$ or $[\calb]$ simply .

\vspace{2mm}
Let  $\cale$ be a locally free sheaf of rank $r$ on a smooth projective curve $C$ and $n \geq 0$.
Throughout this paper,
$p_C :  C \times T \to C$ and $p_T :  C \times T \to T$ are the natural projections for a locally noetherian scheme $T$ over $k$.
Then a morphism $T \to \Quot_{C}^n (\cale)$ corresponds to an exact sequence 
\begin{align}\label{eq_exact}
0 \to \mathscr{A} \to p_C^* \cale \to \mathscr{B} \to 0
\end{align}
on $C \times T$ such that $\mathscr{B}$ is locally free of rank $n$ as an $\calo_T$-module.
Since $C$ is a smooth curve, $\mathscr{A}$ is locally free of rank $r$.

Recall the definition of 
the Quot-to-Chow morphism $\pi : \Quot_C^n (\cale) \to \Sym^n C$ (see \cite[Section 2]{MR4124833} for the details).
Let $Q= \Quot_C^n (\cale)$ and let $0 \to \mathscr{A}_Q \to p_C^* \cale \to \calb_Q \to 0$ be the universal  exact sequence on $C \times Q$.
Since $\mathscr{A}_Q $ is locally free of rank $r=\rank \cale$, we obtain an exact sequence 
\begin{align*}
0 \to \det \mathscr{A}_Q  \to  \det p_C^* \cale \to \mathscr{C} \to 0.
\end{align*}
We can check that $\mathscr{C}$ is flat over $Q$ and hence 
\[
0 \to \det \mathscr{A}_Q  \otimes (\det p_C^* \cale)^{-1}  \to \calo_{C \times Q} \to \mathscr{C}  \otimes (\det p_C^* \cale)^{-1} \to 0
\]
induces
the Quot-to-Chow morphism $ \pi  : Q=\Quot_C^n (\cale) \to \Sym^n C$.

For $q \in C$, let $\Quot_{C}^n (\cale)_q $ be the scheme theoretic fiber of $\pi : \Quot_C^n (\cale) \to \Sym^n C$ over $nq$.
By the definition of $\pi$,
the morphism $T \to \Quot_{C}^n (\cale) $ corresponding to \ref{eq_exact}
factors through $\pi^{-1}(nq) =\Quot_{C}^n (\cale)_q \subset \Quot_{C}^n (\cale) $ if and only if $\det \mathscr{A} = p_C^* \fm^n \det \cale$,
where $\fm=\calo_C(-q)$ is the maximal ideal sheaf corresponding to $q \in C$.

The following proposition is essentially explained in \cite[\S 6.4]{Birkar:2024aa}, at least set-theoretically.

\begin{prop}\label{lem_embedding_to_Grassmannian}
Under the above setting,
$\Quot_{C}^n (\cale)_q$ coincides with $\Quot_C^n (\cale/\fm^n \cale)_{\mathrm{red}} \subset \Quot_{C}^n (\cale)$,
where we embed\footnote{See \cite[\S 5.5.3]{MR2222646} for the embedding between Quot schemes induced by a surjection of coherent sheaves.} $\Quot_C^n (\cale/\fm^n \cale) $ to $ \Quot_{C}^n (\cale) $ by the natural surjection $ \cale \to \cale/\fm^n \cale$
 and 
$\Quot_C^n (\cale/\fm^n \cale)_{\mathrm{red}} $ means the reduced scheme structure on $\Quot_C^n (\cale/\fm^n \cale)$.

In particular,
there exists an embedding $\Quot_{C}^n (\cale)_q = \Quot_C^n (\cale/\fm^n \cale)_{\mathrm{red}}  \hookrightarrow  \Gr(\cale/ \fm^n\cale, n ) =\Gr(nr,n) $.
\end{prop}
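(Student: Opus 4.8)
The plan is to realize $\Quot_C^n(\cale)_q$ as a closed subscheme of $\Quot_C^n(\cale/\fm^n\cale)$, to show the two share the same underlying set, and then to invoke the known reducedness of the former to identify it with the reduced structure of the latter. Recall from the discussion above that a morphism $T \to \Quot_C^n(\cale)$ given by \ref{eq_exact} factors through $\Quot_C^n(\cale)_q$ precisely when $\det\mathscr{A} = p_C^*\fm^n\det\cale$ as subsheaves of $\det p_C^*\cale$, so I would work throughout with this determinantal description of the functor of points.

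The key step, and the main technical obstacle, is to upgrade this determinantal condition to the sheaf-level containment $p_C^*\fm^n\cale \subseteq \mathscr{A}$. This can be checked locally on $C\times T$. Away from $\{q\}\times T$ the sheaf $\fm$ is trivial and the inclusion $\mathscr{A}\hookrightarrow p_C^*\cale$ has determinant a unit, so the containment is automatic. Near $\{q\}\times T$, I trivialize the two rank-$r$ locally free sheaves $\mathscr{A}$ and $p_C^*\cale$ and represent the inclusion by an $r\times r$ matrix $M$ over the local ring; writing $t$ for a uniformizer of $\calo_{C,q}$, the hypothesis reads $\det M = t^n u$ with $u$ a unit. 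The adjugate identity then gives $M\cdot(u^{-1}\mathrm{adj}(M)) = t^n\cdot\mathrm{Id}$, and since $u^{-1}\mathrm{adj}(M)$ has entries in the local ring this shows $t^n\cdot p_C^*\cale\subseteq\mathscr{A}$, i.e.\ $p_C^*\fm^n\cale\subseteq\mathscr{A}$. One must check that this purely formal computation is valid in families (with the $T$-parameters present), but the adjugate identity causes no trouble. Granting the containment, the surjection $p_C^*\cale\twoheadrightarrow\mathscr{B}$ annihilates $p_C^*\fm^n\cale$ and therefore factors as $p_C^*(\cale/\fm^n\cale)\twoheadrightarrow\mathscr{B}$; as $\mathscr{B}$ is still flat of rank $n$ over $T$, this is a $T$-point of $\Quot_C^n(\cale/\fm^n\cale)$. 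Hence the closed immersion $\Quot_C^n(\cale)_q\hookrightarrow\Quot_C^n(\cale)$ factors through the embedding $\Quot_C^n(\cale/\fm^n\cale)\hookrightarrow\Quot_C^n(\cale)$ of the footnote, yielding a closed immersion $\Quot_C^n(\cale)_q\hookrightarrow\Quot_C^n(\cale/\fm^n\cale)$.

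Next I would verify that these two closed subschemes of $\Quot_C^n(\cale)$ have the same underlying set. One inclusion is immediate from the closed immersion just constructed. For the reverse, every closed point of $\Quot_C^n(\cale/\fm^n\cale)$ is a length-$n$ quotient of the sheaf $\cale/\fm^n\cale$, which is supported at $q$; its image under $\pi$ is therefore $nq$, so it lies in $\Quot_C^n(\cale)_q$ (equivalently, at the level of closed points a length-$n$ torsion module over the discrete valuation ring $\calo_{C,q}$ is annihilated by $\fm^n$, the pointwise shadow of the containment above). Since $\Quot_C^n(\cale)_q$ is known to be a normal, in particular reduced, variety by \cite{MR4124833, Birkar:2024aa}, and it is a closed subscheme of $\Quot_C^n(\cale/\fm^n\cale)$ with the same underlying set, the universal property of the reduction forces $\Quot_C^n(\cale)_q = \Quot_C^n(\cale/\fm^n\cale)_{\mathrm{red}}$.

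Finally, for the Grassmannian embedding I would use that $\cale/\fm^n\cale$ is supported at the single point $q$ and hence is a finite-dimensional $k$-vector space, of dimension $\dim_k(\calo_{C,q}/\fm^n)\cdot r = nr$. For a sheaf of finite length this is the standard realization of the Quot scheme of length-$n$ quotients as a closed subscheme of the Grassmannian $\Gr(\cale/\fm^n\cale, n) = \Gr(nr, n)$ of $n$-dimensional quotient vector spaces: an $\calo_C$-module quotient is in particular a $k$-linear quotient, and the locus where a $k$-linear quotient respects the module structure is closed. Composing with the identification of the previous step gives the asserted embedding $\Quot_C^n(\cale)_q = \Quot_C^n(\cale/\fm^n\cale)_{\mathrm{red}}\hookrightarrow\Gr(nr,n)$.
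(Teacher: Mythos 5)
Your proof is correct and follows essentially the same route as the paper's: the adjugate/Cramer's-rule argument upgrading $\det\mathscr{A} = p_C^*\fm^n\det\cale$ to the containment $p_C^*\fm^n\cale \subseteq \mathscr{A}$, the comparison of closed points combined with the known reducedness of $\Quot_C^n(\cale)_q$ to identify it with $\Quot_C^n(\cale/\fm^n\cale)_{\mathrm{red}}$, and the standard embedding of the Quot scheme of the $nr$-dimensional sheaf $\cale/\fm^n\cale$ into $\Gr(nr,n)$. The only difference is one of detail: you write out the local matrix computation and the closedness of the module-compatibility locus, which the paper leaves implicit.
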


\begin{proof}
Consider a morphism $T \to \Quot_C^n (\cale)_q$, which corresponds to an exact sequence  $0 \to \mathscr{A} \to p_C^* \cale \to \mathscr{B} \to 0$ on $C \times T$.
Then $ \mathscr{A}$ is locally free of rank $r$ with $\det \mathscr{A} =  p_C^* \fm^n \det  \cale$.
By Cramer's rule,  $\mathscr{A} $ contains $  p_C^* \fm^n \cale$
and hence the morphism $T \to \Quot_C^n (\cale)_q \subset \Quot_C^n (\cale)$ factors through the subscheme
$ \Quot_C^n (\cale/\fm^n \cale) \subset \Quot_C^n (\cale)$.
This means that $ \Quot_C^n (\cale)_q $ is a subscheme of $ \Quot_C^n (\cale/\fm^n \cale)$.

A closed point of $  \Quot_C^n (\cale/\fm^n \cale)$ is a quotient $[\cale/\fm^n \cale \to \calb] $ on $C$ whose length is $n$.
As a point in $ \Quot_C^n (\cale)$,
this is the point  $[\cale \to \cale/\fm^n \cale \to \calb]$, which is contained in $ \Quot_C^n (\cale)_q $  since $\Supp(\calb) =\{q\}$.
Since $ \Quot_C^n (\cale)_q $ is reduced by \cite{MR4124833} or \cite{Birkar:2024aa}, 
$\Quot_{C}^n (\cale)_q = \Quot_C^n (\cale/\fm^n \cale)_{\mathrm{red}} $ holds.

Since $\cale/\fm^n \cale$ is a $k$-vector space of dimension $nr$, 
the Quot scheme $ \Quot_C^n (\cale/\fm^n \cale)$ is naturally embedded to the Grassmannian $\Gr(\cale/\fm^n \cale, n)=\Gr(nr,n)$.
In fact,  a morphism $T \to \Quot_C^n (\cale/\fm^n \cale)$
corresponds to a quotient  $( \cale/\fm^n \cale) \otimes \calo_T  \to \mathscr{B}$ of $ \calo_{C,q} \otimes \calo_T $-modules such that  $\mathscr{B}$ is locally free of rank $n$ as $\calo_T$-module.
On the other hand,
a morphism $T \to \Gr(\cale/\fm^n \cale, n)$
corresponds to a quotient  $( \cale/\fm^n \cale) \otimes \calo_T  \to \mathscr{B}$ of $\calo_T $-modules such that  $\mathscr{B}$ is locally free of rank $n$. 
Hence there exists a natural injection
$\Hom_{k\text{-}sch} (T,  \Quot_C^n (\cale/\fm^n \cale)) \to \Hom_{k\text{-}sch} (T,  \Gr(\cale/\fm^n \cale, n))$.
Thus $ \Quot_C^n (\cale/\fm^n \cale)$ is embedded into  $\Gr(\cale/\fm^n \cale, n)=\Gr(nr,n)$.
\end{proof}

\begin{rem}\label{rem_embedding_to_Grassmannian}
Let $ 0 \to \mathscr{A}_n \to p_C^* \cale \to \mathscr{B}_n \to 0$ be the universal exact sequence on $C \times \Quot_{C}^n (\cale)_q$.
The embedding  $\Quot_{C}^n (\cale)_q \to \Gr(\cale/ \fm^n\cale, n )$ is induced by
\begin{align*}
0 \to \overline{\mathscr{A}_n/ p_C^* \fm^n \cale} \to \overline{p_C^* \cale /p_C^* \fm^n \cale} = ( \cale / \fm^n \cale ) \otimes \calo_{ \Quot_{C}^n (\cale)_q}  \to \overline{\mathscr{B}_{n}} \to 0,
\end{align*}
where $\overline{\calf}$ is the pushforward of $\calf $ by  $p_{\Quot_{C}^n (\cale)_q} : C \times \Quot_{C}^n (\cale)_q \to \Quot_{C}^n (\cale)_q$.
In particular,
 $ \calo_{ \Gr(\cale/ \fm^n\cale, n ) } (1) |_{ \Quot_{C}^n (\cale)_q}  = \det  \overline{\mathscr{B}_{n}}$ holds,
 where $ \calo_{ \Gr(\cale/ \fm^n\cale, n ) } (1)$ is the Pl\"{u}cker line bundle of the Grassmannian $ \Gr(\cale/ \fm^n\cale, n )$.
\end{rem}

\begin{rem}\label{rem_pic_Q}
The Picard group of the Quot scheme $Q= \Quot_{C}^n (\cale) $ is computed by \cite{MR4323001} as follows.
Let $ 0 \to \mathscr{A}_Q \to p_C^* \cale \to \mathscr{B}_Q \to 0$ be the universal exact sequence on $C \times Q$
and let $ \calo_Q (1) = \det ( {p_Q}_* ( \mathscr{B}_Q) )$.
Then $\pi^* : \Pic^0 (\Sym^n C ) \to \Pic (Q)$ induced by 
the Quot-to-Chow morphism $\pi : Q \to \Sym^n C  $ is injective and 
$\Pic (Q) = \pi^*\Pic^0 (\Sym^n C ) \oplus \Z[ \calo_Q (1)] $ by  \cite[Theorem 3.7]{MR4323001}.

Then $ \calo_{ \Gr(\cale/ \fm^n\cale, n ) } (1) |_{ \Quot_{C}^n (\cale)_q}$ coincides with    $ \calo_Q (1) |_{ \Quot_{C}^n (\cale)_q}$
since 
\[
\calo_{ \Gr(\cale/ \fm^n\cale, n ) } (1) |_{ \Quot_{C}^n (\cale)_q} =\det  \overline{\mathscr{B}_{n}} = \det( {p_Q}_* ( \mathscr{B}_Q)|_{\Quot_{C}^n (\cale)_q}) =  \calo_Q (1)|_{\Quot_{C}^n (\cale)_q}
\] for
$ \overline{\mathscr{B}_{n}} $ in \autoref{rem_embedding_to_Grassmannian}. 
\end{rem}

\begin{rem}\label{rem_non-reduced}
In general, $  \Quot_C^n (\cale/\fm^n \cale)$ is non-reduced.
For example, let $ \cale=\calo_{C}^{\oplus 2}, n =2$.
Then $\cale/\fm^2 \cale =(k[t]/(t^2))^{\oplus 2}$,
where $t$ is a local coordinate of $C$ at $q$.
For $T= \Spec R=\Spec k[\ep]/(\ep^2)$,
a quotient
\begin{align*}
 p_C^*( \cale/\fm^2 \cale) = (R[t]/(t^2))^{\oplus 2} \to R^{\oplus 2} : (f(t),g(t)) \mapsto (f(\epsilon),g(\epsilon)) 
\end{align*}
on $ C \times T$
gives a morphism $ T \to \Quot_{C}^2 (\cale/\fm^2 \cale)$.
This does not factor through $\Quot_{C}^2 (\cale)_q $ if $\mathrm{char} \ k \neq 2$ since the kernel of $ p_C^* \cale=\calo_{C \times T}^{\oplus 2} \to  (R[t]/(t^2))^{\oplus 2} \to R^{\oplus 2}$
is $(t-\ep) \calo_{C \times T}^{\oplus 2}$, whose determinant is $(t^2 -2\ep t) \calo_{C \times T} \neq t^2 \calo_{C \times T}$.
\end{rem}

\section{Picard groups}\label{sec_picard_group}

Throughout this section, we fix a locally free sheaf $\cale$ of rank $r$ on a smooth projective curve $C$  and $q \in C$.
Since the punctual Quot scheme $\Quot_{C}^n (\cale)_q$ is a point if $r=1$,
we assume $r \geq 2$ in the rest of this section. 
For simplicity, we set $F_n=\Quot_{C}^n (\cale)_q$ and $\fm =\calo_C(-q) \subset \calo_C$.
As in \autoref{rem_embedding_to_Grassmannian},
$\overline{\calf}$ denotes the pushforward of a coherent sheaf $\calf $ on $C \times F_n$ by the projection $p_{F_n} : C \times  F_n \to F_n$.

Let 
\begin{align*}
0 \to \mathscr{A}_n \to p_C^* \cale \to \mathscr{B}_n \to 0
\end{align*}
be the universal exact sequence on $C \times F_n$.
Recall that
$\mathscr{A}_n$ is locally free of rank $r$ with $\det \mathscr{A}_n = p_C^*  \mathfrak{m}^n \det \cale$.
Then $\overline{\mathscr{A}_n / \fm \mathscr{A}_n}= \mathscr{A}_n|_{ \{q\} \times F_n} $ 
is locally free of rank $r$ on $F_n$ and hence we can define
a $\P^{r-1}$ bundle $f_n :  \P_{F_n}( \overline{\mathscr{A}_n / \fm \mathscr{A}_n}) \to F_n$.
Let $f_n^* (\overline{\mathscr{A}_n/\fm \mathscr{A}_n}) \to \calo_{f_n}(1) $ be the tautological line bundle on $ \P_{F_n}(\overline{ \mathscr{A}_n / \fm \mathscr{A}_n}) $.

\begin{lem}\label{lem_F_n,n+1=P}
For $n \geq 0$, $ \P_{F_n}(\overline{ \mathscr{A}_n / \fm \mathscr{A}_n})  $ is isomorphic to 
\begin{align*}
F_{n,n+1} &\coloneqq \{([\cala_n],[\cala_{n+1}]) \in F_n \times F_{n+1} \mid \cala_{n+1} \subset \cala_n \subset \cale\}
\end{align*}
with the reduced structure
over $F_n$.
\end{lem}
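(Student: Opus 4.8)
Write $V := \overline{\mathscr{A}_n/\fm\mathscr{A}_n}$, a rank-$r$ bundle on $F_n$, so that $P := \P_{F_n}(V)$ carries the tautological quotient $f_n^* V \to \calo_{f_n}(1)$. My plan is to construct morphisms $\Phi : P \to F_{n,n+1}$ and $\Psi : F_{n,n+1} \to P$ over $F_n$ and to check that they are mutually inverse. Because I work with the \emph{reduced} scheme $F_{n,n+1}$ throughout and produce an honest inverse, I never need any normality or birationality input.

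\textbf{The forward map.} First I would build $\Phi$. Pulling the universal sequence on $C \times F_n$ back along $g := \id_C \times f_n$ gives $0 \to g^*\mathscr{A}_n \to p_C^*\cale \to g^*\mathscr{B}_n \to 0$ on $C \times P$. Since restriction commutes with the flat pullback $g$, restricting $g^*\mathscr{A}_n$ to the divisor $\{q\} \times P$ identifies $g^*\mathscr{A}_n|_{\{q\} \times P}$ with $f_n^* V$, and composing with the tautological quotient yields a surjection $g^*\mathscr{A}_n \to i_*\calo_{f_n}(1)$, where $i : \{q\} \times P \hookrightarrow C \times P$. Let $\mathscr{A}_{n+1}$ be its kernel. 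Then $\mathscr{A}_{n+1} \subset g^*\mathscr{A}_n \subset p_C^*\cale$, and $p_C^*\cale / \mathscr{A}_{n+1}$ is an extension of $g^*\mathscr{B}_n$ by $i_*\calo_{f_n}(1)$, hence locally free of rank $n+1$ over $\calo_P$ and supported fibrewise at $q$; the computation $\det \mathscr{A}_{n+1} = g^*(\det \mathscr{A}_n) \otimes p_C^*\fm = p_C^*(\fm^{n+1}\det\cale)$ then places the Chow point at $(n+1)q$. Thus $\mathscr{A}_{n+1}$ (which is the pullback of the universal subsheaf on $C \times F_{n+1}$) defines a morphism $\mu : P \to F_{n+1}$, and $\Phi := (f_n, \mu)$ lands in $F_{n,n+1}$ since $\mathscr{A}_{n+1} \subset g^*\mathscr{A}_n$ fibrewise. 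As $F_n$ is normal and $f_n$ is smooth, $P$ is reduced, so $\Phi$ factors through $F_{n,n+1}$ with its reduced structure.

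\textbf{The inverse map, and the main obstacle.} For $\Psi$ I would invoke the universal property of $P = \P_{F_n}(V)$: a morphism $F_{n,n+1} \to P$ over $\pi_n : F_{n,n+1} \to F_n$ is the same as a line-bundle quotient of $\pi_n^* V$. On $C \times F_{n,n+1}$ I have the two pulled-back universal subsheaves $\pi_{n+1}^*\mathscr{A}_{n+1}$ and $\pi_n^*\mathscr{A}_n$ of $p_C^*\cale$, and the crux is to upgrade the fibrewise inclusions $\cala_{n+1} \subset \cala_n$ to a genuine inclusion $\pi_{n+1}^*\mathscr{A}_{n+1} \subset \pi_n^*\mathscr{A}_n$ of sheaves; equivalently, the composite $\pi_{n+1}^*\mathscr{A}_{n+1} \to p_C^*\cale \to \pi_n^*\mathscr{B}_n$ must vanish. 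This composite is a global section of $(\pi_{n+1}^*\mathscr{A}_{n+1})^\vee \otimes \pi_n^*\mathscr{B}_n$, a sheaf that is finite and flat over $F_{n,n+1}$ (as $\pi_{n+1}^*\mathscr{A}_{n+1}$ is locally free and $\pi_n^*\mathscr{B}_n$ is finite flat), and it dies in every closed fibre. This is exactly the step where the reducedness of $F_{n,n+1}$ is essential, and it is the main obstacle: the conclusion fails over a non-reduced base (compare \autoref{rem_non-reduced}), whereas over a reduced finite-type $k$-scheme a section of a locally free sheaf vanishing in every closed fibre is zero. Granting the inclusion, $\mathscr{Q} := \pi_n^*\mathscr{A}_n / \pi_{n+1}^*\mathscr{A}_{n+1}$ is a relative length-one sheaf at $q$, so $\mathscr{Q} = i_* L$ for a line bundle $L$ and is killed by $p_C^*\fm$; hence $\pi_n^*\mathscr{A}_n \to \mathscr{Q}$ factors through $\pi_n^*\mathscr{A}_n|_{\{q\}} = \pi_n^* V$, giving a line-bundle quotient $\pi_n^* V \to L$ and thereby $\Psi$.

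\textbf{Conclusion.} Finally I would verify $\Psi \circ \Phi = \id_P$ and $\Phi \circ \Psi = \id_{F_{n,n+1}}$. Both composites are morphisms over $F_n$, so by the universal property of $P$ it suffices to check that $\Psi \circ \Phi$ recovers the tautological quotient of $V$ on $P$, which is immediate because the subsheaf $\mathscr{A}_{n+1}$ produced by $\Phi$ reproduces $\calo_{f_n}(1)$ under the $\Psi$-construction, and that $\Phi \circ \Psi$ recovers the universal subsheaf $\pi_{n+1}^*\mathscr{A}_{n+1}$ on $F_{n,n+1}$. Apart from the single delicate point of converting fibrewise inclusion into a global inclusion of subsheaves, every step is a formal manipulation of the universal sequences together with the determinant and support bookkeeping.
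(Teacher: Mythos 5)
Your proposal is correct and follows essentially the same route as the paper's proof: the forward map is obtained as the kernel $\mathscr{A}'$ of $(\id_C \times f_n)^* \mathscr{A}_n \to \iota_* \calo_{f_n}(1)$ together with the snake-lemma and determinant bookkeeping, and the inverse comes from exhibiting $\overline{\tilde{\pr}_n^* \mathscr{A}_n / \tilde{\pr}_{n+1}^* \mathscr{A}_{n+1}}$ as a line-bundle quotient of $\pr_n^* (\overline{\mathscr{A}_n/\fm\mathscr{A}_n})$ and invoking the universal property of the projective bundle. The only (minor) divergences are that you justify the inclusion $\tilde{\pr}_{n+1}^*\mathscr{A}_{n+1} \subset \tilde{\pr}_n^*\mathscr{A}_n$ explicitly by the vanishing-on-closed-fibres argument over the reduced base --- a step the paper's diagram takes for granted --- and that you deduce the factorization through $\pr_n^*(\overline{\mathscr{A}_n/\fm\mathscr{A}_n})$ from the relative-length-one property (again via reducedness) where the paper instead uses Cramer's rule on determinants.
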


\begin{proof}
Let $\pr_n : F_{n,n+1} \to F_n$ and $\pr_{n+1} : F_{n,n+1} \to F_{n+1}$ be the natural projections.
We first explain this lemma set-theoretically.
Fix $[\cala_n] \in F_n$.
Then a point in $\pr_n^{-1}([\cala_n]) $ corresponds to a quotient $\calo_{C}$-module $\cala_n \to \calv$ of length one with $\Supp \calv =\{q\}$.
Since such $\calv$ is isomorphic to $\calo_C/\fm$,
such quotient $\cala_n \to \calv$ corresponds to a quotient $k$-vector space $\cala_n/\fm \cala_n \to V$ with $\dim_k V=1$,
which is nothing but a point in $\P( \cala_n/ \fm \cala_n)  =\P( \overline{\mathscr{A}_n / \fm \mathscr{A}_n} \otimes k([\cala_n])) =f_n^{-1}([\cala_n]) $.
Hence there exists a canonical bijection between $ F_{n,n+1} $ and  $ \P_{F_n}(\overline{ \mathscr{A}_n / \fm \mathscr{A}_n})  $.

We can construct this bijection as an isomorphism as follows.
For simplicity, $\P_{F_n}$ denotes $ \P_{F_n}(\overline{ \mathscr{A}_n / \fm \mathscr{A}_n})  $.
Let 
\[
\iota :  \P_{F_n}=  \{q\}  \times \P_{F_n} \hookrightarrow  C \times \P_{F_n}
\]
be the natural immersion.
Since $\left((\id_C \times f_n)^* \mathscr{A}_n \right) |_{\{q \} \times  \P_{F_n}}= (\id_C \times f_n)^*  (\mathscr{A}_n|_{\{q \} \times F_n})
=\iota_* f_n^* (\overline{ \mathscr{A}_n / \fm \mathscr{A}_n})$,
we can consider the composite map
\begin{align}\label{eq_A'}
(\id_C \times f_n)^* \mathscr{A}_n \to \iota_* f_n^* (\overline{\mathscr{A}_n/\fm \mathscr{A}_n}) \to \iota_* \calo_{f_n}(1) 
\end{align}
on $C \times   \P_{F_n} $.
Let $\mathscr{A}'$ be the kernel of \ref{eq_A'}.
Then we have a diagram
\[
\xymatrix{
  0 \ar[r] & \mathscr{A}' \ar[r] \ar@{^(->}[d] & p_C^* \cale \ar[r] \ar@{=}[d] & p_C^* \cale / \mathscr{A}' \ar[r] \ar@{>>}[d] & 0 \\
  0 \ar[r] & (\id_C \times f_n)^* \mathscr{A}_n \ar[r] & p_C^* \cale \ar[r] & (\id_C \times f_n)^* \mathscr{B}_n \ar[r] & 0
}
\]
on $C \times  \P_{F_n} $.
By the snake lemma,
we have an exact sequence 
\begin{align}\label{eq_O(1)_A'_B_n}
0 \to  \iota_* \calo_{f_n}(1)  \to p_C^* \cale/ \mathscr{A}'   \to (\id_C \times f_n)^*\mathscr{B}_{n} \to 0.
\end{align}
Since $  \iota_* \calo_{f_n}(1)  $ and $ (\id_C \times f_n)^*\mathscr{B}_{n}$ are flat over $ \P_{F_n} $,
so is $ p_C^* \cale/ \mathscr{A}'  $.
Since $ \mathscr{A}' $ is the kernel of \ref{eq_A'},
we see that $\mathscr{A}'$ is a locally free of rank $r$ with $\det \mathscr{A}' = \fm (\id_C \times f_n)^* \det \mathscr{A}_n= p_C^* \mathfrak{m}^{n+1} \det \cale$.
Hence $0 \to \mathscr{A}' \to p_C^* \cale \to p_C^* \cale/ \mathscr{A}'   \to 0$ induces a morphism $ \mu_{n+1} :  \P_{F_n}   \to F_{n+1} =\Quot_{C}^{n+1} (\cale)_q$.
Since $\mathscr{A}' \subset (\id_C \times f_n)^* \mathscr{A}_n$, 
the image of $f_n \times \mu_{n+1} : \P_{F_n}  \to F_n \times F_{n+1}$ is contained in $F_{n,n+1}$.

The inverse of $f_n \times \mu_{n+1}$ is constructed as follows.
For simplicity, $\id_C \times \pr_i  : C \times F_{n,n+1} \to C \times F_i$
 is denoted by $\tilde{\pr}_i$.
Then we have a diagram
\[
\xymatrix{
  0 \ar[r] & \tilde{\pr}_{n+1}^* \mathscr{A}_{n+1} \ar[r] \ar@{^(->}[d] & p_C^* \cale \ar[r] \ar@{=}[d] & \tilde{\pr}_{n+1}^* \mathscr{B}_{n+1} \ar[r] \ar@{>>}[d]^{\varpi} & 0 \\
  0 \ar[r] & \tilde{\pr}_{n}^* \mathscr{A}_{n} \ar[r] & p_C^* \cale \ar[r] & \tilde{\pr}_{n}^* \mathscr{B}_{n} \ar[r] & 0
}
\]
on $C \times F_{n,n+1}$.
Hence 
\begin{align}\label{eq_F_n,n+1=P}
\overline{ \tilde{\pr}_{n}^* \mathscr{A}_{n} /  \tilde{\pr}_{n+1}^* \mathscr{A}_{n+1} } \simeq \overline{ \ker \varpi} 
\end{align}
is locally free of rank one on $F_{n,n+1}$.
Since $  \det \tilde{\pr}_{n+1}^* \mathscr{A}_{n+1} =  p_C^*\fm^{n+1} \det \cale  = \fm \det \tilde{\pr}_{n}^* \mathscr{A}_{n} $,
we have $\fm  \tilde{\pr}_{n}^* \mathscr{A}_{n}  \subset  \tilde{\pr}_{n+1}^* \mathscr{A}_{n+1}$ by Cramer's rule
and hence \ref{eq_F_n,n+1=P} is a quotient of
\begin{align*}
\overline{ \tilde{\pr}_{n}^* \mathscr{A}_{n} /  \fm  \tilde{\pr}_{n}^* \mathscr{A}_{n} }  = \pr_n^* (\overline{ \mathscr{A}_n / \fm \mathscr{A}_n}).
\end{align*}
Hence $\pr_n^* (\overline{ \mathscr{A}_n / \fm \mathscr{A}_n}) \to  \overline{ \tilde{\pr}_{n}^* \mathscr{A}_{n} /  \tilde{\pr}_{n+1}^* \mathscr{A}_{n+1} }$
induces a morphism $F_{n,n+1} \to  \P_{F_n} $.
By construction,
this is the inverse of $f_n \times \mu_{n+1} :  \P_{F_n}  \to F_{n,n+1}$.
\end{proof}

By \autoref{lem_F_n,n+1=P}, $\pr_n : F_{n,n+1} \to F_n$ is a $\P^{r-1}$-bundle.
On the other hand, $\pr_{n+1} : F_{n,n+1} \to F_{n+1}$ is birational as follows.

\begin{lem}\label{lem_fiber_F_n,n+1_to_F_n+1}
Set $U \coloneqq \{ [\cale \twoheadrightarrow \calb_{n+1}] \in F_{n+1} \mid \calb_{n+1} \simeq \calo_C/\fm^{n+1}\}$. Then
\begin{enumerate}
\setlength{\itemsep}{0mm}
\item $\pr_{n+1} : F_{n,n+1} \to F_{n+1}$ is an isomorphism over $U $. 
\item The dimension of the fiber of $\pr_{n+1}$ over a point in $F_{n+1} \setminus U$ is positive.
\item The codimension of $ \pr_{n+1}^{-1} (F_{n+1} \setminus U) $ in $ F_{n,n+1}$ is one.
\end{enumerate}
\end{lem}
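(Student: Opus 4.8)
The plan is to reduce everything to the local algebra of $\cale$ at $q$ and to describe the fibres of $\pr_{n+1}$ explicitly. Write $R=\calo_{C,q}$, a discrete valuation ring with maximal ideal $\fm=(t)$, and identify $\cale$ near $q$ with $R^{\oplus r}$; a closed point of $F_m$ is then a quotient $R^{\oplus r}\twoheadrightarrow B$ of length $m$. Given $[\cale\twoheadrightarrow\calb_{n+1}]\in F_{n+1}$, a point of $\pr_{n+1}^{-1}([\calb_{n+1}])$ is an intermediate $\cala_{n+1}\subset\cala_n\subset\cale$ with $\cale/\cala_n$ of length $n$; since then $\cala_n/\cala_{n+1}\subset\calb_{n+1}$ has length one, the fibre is canonically the set of length-one $R$-submodules of $\calb_{n+1}$, i.e.\ the one-dimensional subspaces of the socle $\operatorname{Soc}(\calb_{n+1})=\calb_{n+1}[\fm]$. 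Writing $\calb_{n+1}\simeq\bigoplus_{i=1}^{s}R/\fm^{a_i}$ with $a_1\geq\dots\geq a_s\geq1$ and $\sum a_i=n+1$, one has $\dim_k\operatorname{Soc}(\calb_{n+1})=s$ and $s\leq r$ (as $\calb_{n+1}$ is a quotient of $R^{\oplus r}$). Thus, set-theoretically, $\pr_{n+1}^{-1}([\calb_{n+1}])$ is the projective space of one-dimensional subspaces of $\operatorname{Soc}(\calb_{n+1})$, of dimension $s-1$, and $[\calb_{n+1}]\in U$ if and only if $s=1$.

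With this dictionary, assertion (2) is immediate: for $[\calb_{n+1}]\notin U$ we have $s\geq2$, so the fibre has positive dimension. For assertion (1), over $U$ the socle is one-dimensional and the unique length-one submodule of $\calb_{n+1}$ is $\fm^{n}\calb_{n+1}$, so there is a unique intermediate $\cala_n$. I would upgrade this bijection to an isomorphism by constructing the inverse directly from the universal family on $C\times U$: setting $\mathscr{B}_n:=\mathscr{B}_{n+1}/p_C^{*}\fm^{n}\mathscr{B}_{n+1}$ and letting $\mathscr{A}_n$ be the preimage of $p_C^{*}\fm^{n}\mathscr{B}_{n+1}$ in $p_C^{*}\cale$, the sequence $0\to\mathscr{A}_n\to p_C^{*}\cale\to\mathscr{B}_n\to0$ should be a flat family of length-$n$ quotients supported at $q$, hence a morphism $U\to F_n$ which, together with $\mathscr{A}_{n+1}\subset\mathscr{A}_n$, yields a section $U\to F_{n,n+1}$ of $\pr_{n+1}$ inverse to it over $U$. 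The crux here is the flatness of $\mathscr{B}_n$ over $U$: since $\mathscr{B}_{n+1}$ is a flat family of \emph{cyclic} modules over $U$, one expects $p_C^{*}\fm^{n}\mathscr{B}_{n+1}$ to be a line bundle (the relative socle) and $\mathscr{B}_n$ to be fibrewise $R/\fm^{n}$, and this is the one point requiring a careful local computation. Alternatively, one may invoke Zariski's main theorem: $F_{n,n+1}\simeq\P_{F_n}(\overline{\mathscr{A}_n/\fm\mathscr{A}_n})$ and $F_{n+1}$ are both normal, $\pr_{n+1}$ is proper and bijective over $U$, so once the fibres over $U$ are seen to be reduced points the isomorphism follows.

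For assertion (3) I would stratify $F_{n+1}$ by the isomorphism type $\lambda=(a_1\geq\dots\geq a_s)$ of $\calb_{n+1}$. Parametrising the stratum $F_{n+1}^{\lambda}$ as $\operatorname{Surj}(R^{\oplus r},M_\lambda)/\operatorname{Aut}_R(M_\lambda)$ with $M_\lambda=\bigoplus_i R/\fm^{a_i}$, and using $\dim_k\Hom_R(R^{\oplus r},M_\lambda)=r(n+1)$ together with $\dim_k\operatorname{End}_R(M_\lambda)=\sum_{i,j}\min(a_i,a_j)$, gives
\begin{align*}
\dim F_{n+1}^{\lambda}=r(n+1)-\sum_{i,j}\min(a_i,a_j).
\end{align*}
The cyclic type $\lambda=(n+1)$ recovers the open stratum $U$ of dimension $(n+1)(r-1)$, while for $\lambda=(n,1)$ (which occurs since $r\geq2$ and $n\geq1$) one computes $\dim F_{n+1}^{(n,1)}=(n+1)(r-1)-2$, and the fibre of $\pr_{n+1}$ over this stratum is a $\P^{1}$; hence $\pr_{n+1}^{-1}(F_{n+1}^{(n,1)})$ has dimension $(n+1)(r-1)-1$. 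Since $\pr_{n+1}$ is an isomorphism over the dense open $U$ by (1), the exceptional locus $\pr_{n+1}^{-1}(F_{n+1}\setminus U)$ is a proper closed subset of the irreducible $(n+1)(r-1)$-fold $F_{n,n+1}$, so has codimension at least one; as it contains a subset of dimension $(n+1)(r-1)-1$, its codimension is exactly one. I expect the flatness step in (1) to be the genuine technical obstacle, the remainder being bookkeeping of the local-algebra dimension count.
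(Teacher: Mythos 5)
Your proposal is correct, and its core is exactly the paper's argument: both identify the fibre of $\pr_{n+1}$ over $[\cale \twoheadrightarrow \calb_{n+1}]$ with the projective space of lines in the socle $\calb'_{n+1}=\{b\in\calb_{n+1}\mid \fm b=0\}$, which gives (2) at once since a non-cyclic $\calb_{n+1}$ has socle of dimension $\geq 2$; and both prove (1) by the same explicit inverse, namely the morphism $g\colon U\to F_n$ induced by the quotient $\mathscr{B}_{n+1}/\fm^{n}\mathscr{B}_{n+1}$ of the universal quotient, together with the section $(g,\id_U)$ of $\pr_{n+1}$. The flatness point you single out is handled in the paper in one line, and your expectation of how it goes is right: over $U$ the fibres of $\mathscr{B}_{n+1}/\fm^{n}\mathscr{B}_{n+1}$ are all isomorphic to $\calo_C/\fm^{n}$, so the length is constant over the reduced base $U$ and flatness follows. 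The one genuine divergence is in (3): the paper simply cites \cite[\S 5]{Birkar:2024aa} for $\codim_{F_{n+1}}(F_{n+1}\setminus U)=2$ and combines it with (1), (2) and irreducibility of $F_{n,n+1}$, whereas you reprove the needed codimension bound via the orbit count $\dim F_{n+1}^{\lambda}=r(n+1)-\sum_{i,j}\min(a_i,a_j)$, using only the stratum $\lambda=(n,1)$ (nonempty since $r\geq 2$, codimension two, $\P^1$-fibres). This makes the lemma self-contained and in fact uses strictly less than the cited statement: you never need that \emph{every} stratum outside $U$ has codimension $\geq 2$, only that one codimension-two stratum with positive-dimensional fibres exists, squeezing the exceptional locus between codimension $\leq 1$ and $\geq 1$. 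The cost is that the orbit count itself needs justification (surjections form an open subset of $\Hom_{\calo_{C,q}}(\calo_{C,q}^{\oplus r},M_\lambda)$, and $\Aut(M_\lambda)$ acts freely on them), but that is routine.

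One caution on your fallback route to (1): the paper works over an algebraically closed field of arbitrary characteristic, and in positive characteristic a proper bijective morphism need not be birational, so Zariski's main theorem only applies after you know the scheme-theoretic fibres over $U$ are reduced points --- as you yourself note. Verifying that reducedness is essentially the same work as the relative construction of the inverse, so your primary route (which coincides with the paper's) is the right one to carry out.
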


\begin{proof}
Let $[\cala_{n+1}] \in F_{n+1}$ be a point corresponding to $0 \to \cala_{n+1} \to \cale \to \calb_{n+1} \to 0$. 
Let $ \calb'_{n+1}=\{ b \in  \calb_{n+1} \mid \fm b =0 \} $. 
Then the fiber $\pr_{n+1}^{-1}([\cala_{n+1}])$ is canonically identified with $\Gr(1, \calb'_{n+1})  $ as follows:
A point in the fiber $\pr_{n+1}^{-1} ([\cala_{n+1}])$ corresponds to a quotient $\calb_{n+1} \to \calb_n$ of length $n$.
Such quotient corresponds to a submodule $\calc \subset \calb_{n+1} $ of length one.
Since such a submodule $\calc $ is isomorphic to $\calo_C/\fm$ and hence $\fm \calc=0$,
a submodule $\calc \subset \calb_{n+1} $ of length one is nothing but a one dimensional subspace of $\calb'_{n+1}$.
Hence there exists a bijection between the fiber $\pr_{n+1}^{-1} ([\cala_{n+1}])$ and  $\Gr(1,\calb'_{n+1})$.
\\
(1) If $ \calb_{n+1} \simeq \calo_C/\fm^{n+1}$, it holds that $\calb'_{n+1} =\fm^n \calb_{n+1} $ and $ \calb_{n+1} / \calb'_{n+1} \simeq \calo_C/\fm^{n}$.
Hence for the universal exact sequence $ 0 \to \mathscr{A}_{n+1} \to p_C^* \cale \to \mathscr{B}_{n+1}  \to 0$ on $C \times F_{n+1}$,
the quotient $\mathscr{B}_{n+1} /\fm^n \mathscr{B}_{n+1} $ is flat of length $n$ over $U$.
Hence $p_C^* \cale|_{C \times U}  \to (\mathscr{B}_{n+1} /\fm^n \mathscr{B}_{n+1})|_{C \times U} $ gives a morphism $g : U \to F_n$.
By construction, $(g, \id_{U} ) : U \to F_{n,n+1}$ is the inverse of $\pr_{n+1}$ over $U$.
\\
(2) If $ \calb_{n+1} \not \simeq \calo_C/\fm^{n+1}$,
it holds that $ \calb_{n+1} \simeq \bigoplus_{i=1}^l \calo_C/\fm^{n_i} $ with $\sum_{i=1}^l n_i=n+1$ for some $ l\geq 2$ and $n_i \geq 1$
by the classification of modules over PID.
Then $\calb'_{n+1} \simeq \bigoplus_{i=1}^l \fm^{n_i-1}/\fm^{n_i} \simeq k^l$ and hence the dimension of $\Gr(1,\calb'_{n+1}) =\P^{l-1}$ is positive.\\
(3) Since $\codim_{F_{n+1}} (F_{n+1} \setminus U) =2$ by \cite[\S 5]{Birkar:2024aa},
the codimension of the exceptional locus $\pr_{n+1}^{-1}(F_{n+1} \setminus U) $ is one
by (1), (2) and the irreducibility of $F_{n,n+1}$.
\end{proof}

\begin{prop}\label{prop_Picard_number}
For each $n\geq 1$, the following hold.
\begin{enumerate}
\setlength{\itemsep}{0mm}
\item $F_n$ is $\Q$-factorial and $\Pic F_n =\Z \calo_{F_n}(1)$, where $\calo_{F_n} (1) =  \calo_{\Gr(\cale/\fm^n \cale,n)} (1) |_{F_n} $ is the restriction of $ \calo_{\Gr(\cale/\fm^n \cale,n)} (1)$ to  $F_n \subset \Gr(\cale/\fm^n \cale,n)$.
\item $K_{F_n} = \calo_{F_n}(-r)$ and $K_{F_{n,n+1}} =\pr_{n+1}^* K_{F_n}$ hold. 
\item $\pr_{n+1} : F_{n,n+1} \to F_{n+1}$ is a divisorial contraction.
\item For $n \geq 2$, the singular locus of $F_{n}$ is $\{[\cale \twoheadrightarrow \calb_n ] \in F_n \mid \calb_n \not \simeq \calo_C/\fm^n\}$,
which is irreducible of codimension two in $F_n$.
\end{enumerate}
\end{prop}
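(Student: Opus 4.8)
The plan is to prove all four assertions simultaneously by induction on $n$, playing the two morphisms $f_n\colon F_{n,n+1}\to F_n$ and $\pr_{n+1}\colon F_{n,n+1}\to F_{n+1}$ from \autoref{lem_F_n,n+1=P} and \autoref{lem_fiber_F_n,n+1_to_F_n+1} against each other. For the base case $n=1$ one has $F_1\cong\P^{r-1}$ (recalled in the introduction), and since $\Gr(\cale/\fm\cale,1)=\P^{r-1}$ with $F_1$ the whole space, $\Pic F_1=\Z\calo_{F_1}(1)$, $\Q$-factoriality (smoothness), and $K_{F_1}=\calo_{F_1}(-r)$ are the standard facts for projective space; (4) is vacuous. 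The inductive step deduces the statement for $F_{n+1}$ from that for $F_n$.

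First I would read off the geometry of $F_{n,n+1}$ from the $\P^{r-1}$-bundle $f_n$. As $f_n$ is a projective bundle over the $\Q$-factorial base $F_n$ with $\Pic F_n=\Z\calo_{F_n}(1)$, the total space $F_{n,n+1}$ is $\Q$-factorial with $\Pic F_{n,n+1}=f_n^*\Pic F_n\oplus\Z\calo_{f_n}(1)\cong\Z^2$, and $\Sing F_{n,n+1}=f_n^{-1}(\Sing F_n)$. The relative Euler sequence gives $K_{F_{n,n+1}}=f_n^*K_{F_n}+f_n^*\det(\overline{\mathscr{A}_n/\fm\mathscr{A}_n})-r\,\calo_{f_n}(1)$. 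The crucial simplification is that $\det\overline{\mathscr{A}_n/\fm\mathscr{A}_n}=(\det\mathscr{A}_n)|_{\{q\}\times F_n}=(p_C^*\fm^n\det\cale)|_{\{q\}\times F_n}$ is trivial, so using the inductive $K_{F_n}=\calo_{F_n}(-r)$ I get $K_{F_{n,n+1}}=-r\bigl(f_n^*\calo_{F_n}(1)+\calo_{f_n}(1)\bigr)$. Taking determinants in the exact sequence $0\to\overline{\ker\varpi}\to\pr_{n+1}^*\overline{\mathscr{B}_{n+1}}\to f_n^*\overline{\mathscr{B}_n}\to 0$ extracted from \autoref{lem_F_n,n+1=P}, together with the identifications $\calo_{F_m}(1)=\det\overline{\mathscr{B}_m}$ of \autoref{rem_embedding_to_Grassmannian} and $\overline{\ker\varpi}=\calo_{f_n}(1)$, yields $\pr_{n+1}^*\calo_{F_{n+1}}(1)=f_n^*\calo_{F_n}(1)+\calo_{f_n}(1)$; hence $K_{F_{n,n+1}}=-r\,\pr_{n+1}^*\calo_{F_{n+1}}(1)$.

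Next I would descend along $\pr_{n+1}$. Since $\rho(F_{n,n+1})=2$ and $\pr_{n+1}$ is a birational morphism with connected fibers (normal target) having positive-dimensional fibers over $F_{n+1}\setminus U$, the classes it contracts form a proper nonzero face of $\overline{\mathrm{NE}}(F_{n,n+1})$; in a two-dimensional cone this is a single ray, distinct from the $f_n$-fiber ray (an $f_n$-fiber is not $\pr_{n+1}$-contracted). Thus $\pr_{n+1}$ is an extremal divisorial contraction, giving (3), and $\rho(F_{n+1})=1$ with $\Q$-factoriality descending from $F_{n,n+1}$, which settles $\Q$-factoriality in (1). As $\pr_{n+1}^*\colon\Pic F_{n+1}\hookrightarrow\Pic F_{n,n+1}\cong\Z^2$ is injective and carries $\calo_{F_{n+1}}(1)$ to the primitive class $f_n^*\calo_{F_n}(1)+\calo_{f_n}(1)$, the generator $\calo_{F_{n+1}}(1)$ is itself primitive, so $\Pic F_{n+1}=\Z\calo_{F_{n+1}}(1)$, finishing (1). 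For (2), writing $K_{F_{n,n+1}}=\pr_{n+1}^*K_{F_{n+1}}+aE$ for the contracted prime divisor $E$ and using $\Pic_\Q F_{n,n+1}=\pr_{n+1}^*\Pic_\Q F_{n+1}\oplus\Q E$, the fact that $K_{F_{n,n+1}}=-r\,\pr_{n+1}^*\calo_{F_{n+1}}(1)$ already lies in $\pr_{n+1}^*\Pic_\Q F_{n+1}$ forces $a=0$; hence $\pr_{n+1}$ is crepant ($K_{F_{n,n+1}}=\pr_{n+1}^*K_{F_{n+1}}$) and $K_{F_{n+1}}=\calo_{F_{n+1}}(-r)$.

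Finally, for the singular locus (4): over $U$ one has $\pr_{n+1}^{-1}(U)=F_{n,n+1}\setminus E$, and since a surjection $\calb_{n+1}\twoheadrightarrow\calb_n$ makes a non-cyclic $\calb_n$ force a non-cyclic $\calb_{n+1}$, the inductive description of $\Sing F_n$ gives $\Sing F_{n,n+1}=f_n^{-1}(\Sing F_n)\subseteq E$; thus $F_{n,n+1}\setminus E$ is smooth and $F_{n+1}$ is smooth along $U$. Along $F_{n+1}\setminus U$ (codimension two), crepancy of $E$ contradicts the positivity of the minimal discrepancy over a smooth point of codimension $\geq 2$, so $F_{n+1}$ is singular there; hence $\Sing F_{n+1}=F_{n+1}\setminus U$. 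Irreducibility and codimension two follow by identifying $F_{n+1}\setminus U=\pr_{n+1}(E)$ with the closure of the stratum $\{\calb_{n+1}\simeq\calo_C/\fm^n\oplus\calo_C/\fm\}$, using the fiberwise computation over $U_n$ and the codimension bound of \cite[\S5]{Birkar:2024aa} invoked in \autoref{lem_fiber_F_n,n+1_to_F_n+1}. I expect the main obstacle to be the determinant bookkeeping of the second paragraph — the triviality of $\det\overline{\mathscr{A}_n/\fm\mathscr{A}_n}$ and the identification $\overline{\ker\varpi}=\calo_{f_n}(1)$ with the correct sign — since the Fano index, the primitivity of the Picard generator, and the crepancy of $\pr_{n+1}$ all rest on these two identities.
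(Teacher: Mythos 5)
Your proposal is correct and follows the paper's own skeleton: induction on $n$, the projective-bundle formula for $f_n$, the identity $\pr_{n+1}^*\calo_{F_{n+1}}(1)=\calo_{f_n}(1)\otimes f_n^*\calo_{F_n}(1)$ obtained from \autoref{lem_F_n,n+1=P} and \autoref{rem_embedding_to_Grassmannian}, and descent of $\Q$-factoriality and Picard number along the divisorial contraction $\pr_{n+1}$. Where you deviate, your variants are sound and in two places actually cleaner than the paper. (i) For the triviality of $\det(\overline{\mathscr{A}_n/\fm\mathscr{A}_n})$ you note that $\overline{\mathscr{A}_n/\fm\mathscr{A}_n}=\mathscr{A}_n|_{\{q\}\times F_n}$ and that $\det\mathscr{A}_n=p_C^*\fm^n\det\cale$ restricts trivially to a fiber of $p_C$; the paper instead runs a four-term exact sequence (multiplication by a local parameter) and takes alternating determinants. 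Your argument is shorter and equally valid. (ii) In (4), you prove smoothness of $F_{n+1}$ along $U$ internally: $\Sing F_{n,n+1}=f_n^{-1}(\Sing F_n)$ lands inside the exceptional locus because a quotient of a cyclic module is cyclic, and $\pr_{n+1}$ is an isomorphism over $U$; the paper instead cites \cite[Lemma 3.3]{MR4124833}. This makes your proof self-contained at the cost of carrying (4) in the induction loop, and accordingly you run the discrepancy-zero argument on $\pr_{n+1}$ over $F_{n+1}$ rather than on $\pr_n$ over $F_n$ as the paper does; both are fine. (iii) For crepancy and $K_{F_{n+1}}=\calo_{F_{n+1}}(-r)$ you use the decomposition $\Pic_\Q F_{n,n+1}=\pr_{n+1}^*\Pic_\Q F_{n+1}\oplus\Q E$ to force the discrepancy to vanish, while the paper simply pushes forward $K_{F_{n,n+1}}=\pr_{n+1}^*\calo_{F_{n+1}}(-r)$; these are equivalent. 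Two remarks: your justification of irreducibility of $F_{n+1}\setminus U$ via the closure of the stratum $\{\calb_{n+1}\simeq\calo_C/\fm^n\oplus\calo_C/\fm\}$ is vaguer than the mechanism both you and the paper already have available (it is the image of the unique prime exceptional divisor of an extremal contraction); and the descent of $\Q$-factoriality along a divisorial contraction, valid here in any characteristic, is asserted by the paper with no more justification than you give --- indeed your identification of the contracted extremal ray supplies slightly more detail --- so this is not a gap relative to the paper's own proof.
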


\begin{proof}
We show (1) and (2) by the induction of $n$.
Since $F_1 =\P(\cale / \fm \cale) \simeq \P^{r-1}$, (1), (2) hold for $n=1$.
We assume (1), (2) for $n$
and show (1), (2) for $n+1$.

By induction hypothesis,
$\P_{F_n}( \overline{\mathscr{A}_n / \fm \mathscr{A}_n})  =F_{n,n+1}$ is $\Q$-factorial with 
$\Pic \P_{F_n}( \overline{\mathscr{A}_n / \fm \mathscr{A}_n}) =  \Z  \calo_{f_n}(1) \oplus \Z f_n^* \calo_{F_n} (1)$,
where $\calo_{f_n}(1) $  is the tautological line bundle of  $f_n :  \P_{F_n}( \overline{\mathscr{A}_n / \fm \mathscr{A}_n})  \to F_n$.
Since $\pr_{n+1} : F_{n,n+1} \to F_{n+1}$ is birational and contracts a divisor by \autoref{lem_fiber_F_n,n+1_to_F_n+1},
$F_{n+1}$ is $\Q$-factorial with Picard number one.

Recall that the embedding $ F_i \hookrightarrow  \Gr(\cale/\fm^{i}\cale,i)$ is induced by the quotient 
\begin{align*}
(\cale/\fm^{i} \cale)\otimes \calo_{F_i}=  \overline{p_C^* \cale/p_C^* \mathfrak{m}_q^{i} \cale}  \to \overline{\mathscr{B}_i} 
\end{align*}
on $F_i$ and hence $ \calo_{F_i}(1) =\det  \overline{\mathscr{B}_i}$ by \autoref{rem_embedding_to_Grassmannian} for $i =n,n+1$.
On the other hand,
 $\pr_{n+1} : \P_{F_n}( \overline{\mathscr{A}_n / \fm \mathscr{A}_n})=F_{n,n+1} \to F_{n+1} $ is induced by the quotient
$ p_C^* \cale \to p_C^* \cale/\mathscr{A}' $ on $C \times  \P_{F_n}( \overline{\mathscr{A}_n / \fm \mathscr{A}_n})$,
where $\mathscr{A}' $ in the kernel of \ref{eq_A'}.
Hence $\pr_{n+1} : \P_{F_n}( \overline{\mathscr{A}_n / \fm \mathscr{A}_n})=F_{n,n+1} \to F_{n+1} \subset \Gr(\cale/\fm^{n+1}\cale,n+1)$ is induced by 
the quotient 
\begin{align*}
(\cale/\fm^{n+1} \cale)\otimes \calo_{ \P_{F_n}( \overline{\mathscr{A}_n / \fm \mathscr{A}_n})} =  {p}_*(p_C^* \cale/p_C^* \mathfrak{m}_q^{n+1} \cale)   \to p_*(p_C^* \cale/\mathscr{A}')
\end{align*}
on $ \P_{F_n}( \overline{\mathscr{A}_n / \fm \mathscr{A}_n})$,
where $p : C \times  \P_{F_n}( \overline{\mathscr{A}_n / \fm \mathscr{A}_n})\to \P_{F_n}( \overline{\mathscr{A}_n / \fm \mathscr{A}_n})$ is the second projection.
Taking $p_*$ of \ref{eq_O(1)_A'_B_n},
we obtain
\begin{align*}
0 \to \calo_{f_n}(1) \to p_*(p_C^* \cale/\mathscr{A}') \to f_n^* \overline{\mathscr{B}_n} \to 0.
\end{align*}
Thus it holds that
\begin{align*}
\pr_{n+1}^* \calo_{F_{n+1}}(1) = \det p_*(p_C^* \cale/\mathscr{A}')  =  \calo_{f_n}(1) \otimes \det  f_n^* \overline{\mathscr{B}_n}  =  \calo_{f_n}(1) \otimes f_n^* \calo_{F_n}(1),
\end{align*}
which is primitive in  $\Pic\P_{F_n}( \overline{\mathscr{A}_n / \fm \mathscr{A}_n}) =\Z   \calo_{f_n}(1) \oplus  \Z f_n^* \calo_{F_n} (1)$.
Hence $\Pic F_{n+1}$ is generated by $ \calo_{F_{n+1}}(1)$, which proves (1) for $n+1$.

To show (2), we determine $\det (\overline{\mathscr{A}_n/\fm \mathscr{A}_n}) \in \Pic F_n$ first.
For a generator $t \in \fm \calo_{C,q}$,
the kernel of $p_C^*  \cale  / p_C^* \fm^{n+1}  \cale \xrightarrow{t \times } p_C^* \cale  / p_C^* \fm^{n+1}  \cale $ on $C \times F_n$
is $p_C^* \fm^{n} \cale  / p_C^* \fm^{n+1} \cale$,
which is contained in $\mathscr{A}_n/ p_C^* \fm^{n+1} \cale$.
Hence we have an exact sequence 
\begin{align*}
0 \to p_C^* \fm^{n} \cale /p_C^* \fm^{n+1}\cale  \to \mathscr{A}_n/p_C^* \fm^{n+1}\cale  \xrightarrow{t \times } \mathscr{A}_n/p_C^* \fm^{n+1}\cale   \to \mathscr{A}_n/\fm \mathscr{A}_n \to 0.
\end{align*}
Taking pushforwards,
we have  an exact sequence 
\begin{align*}
0 \to \overline{p_C^* \fm^{n} \cale /p_C^* \fm^{n+1}\cale}  \to \overline{\mathscr{A}_n/p_C^* \fm^{n+1}\cale}    \to\overline{ \mathscr{A}_n/p_C^* \fm^{n+1}\cale}  
\to \overline{\mathscr{A}_n/\fm \mathscr{A}_n} \to 0
\end{align*}
of locally free sheaves on $F_n$.
Since $\overline{p_C^* \fm^{n} \cale /p_C^* \fm^{n+1}\cale} = (\fm^{n} \cale /\fm^{n+1} \cale) \otimes \calo_{F_n}$ is a trivial bundle of rank $r$,
it holds that $ \det ( \overline{\mathscr{A}_n/\fm \mathscr{A}_n} )=\calo_{F_n}$.

Since $K_{F_n} = \calo_{F_{n}}(-r)$ by induction hypothesis,
we have 
\[ 
K_{\P_{F_n}(\overline{\mathscr{A}_n/\fm \mathscr{A}_n})} = \calo_{f_n}(- r) \otimes f_n^* (K_{F_n} \otimes  \det (\overline{\mathscr{A}_n/\fm \mathscr{A}_n})) = \calo_{f_n} (- r) \otimes f_n^* \calo_{F_{n}}(-r) = \pr_{n+1}^* \calo_{F_{n+1}} (-r).
\]
Thus $K_{F_{n+1}} = {\pr_{n+1}}_* K_{\P_{F_n}(\overline{\mathscr{A}_n/\fm \mathscr{A}_n})}  = \calo_{F_{n+1}} (-r)$,
which proves (2) for $n+1$.

Hence (1) and (2) are proved for any $n \geq 1$.
Since $F_{n,n+1}$ and $F_{n+1}$ are $\Q$-factorial with Picard number two and one respectively,
(3) holds.\\
(4) 
Assume $n \geq 2$.
By \autoref{lem_fiber_F_n,n+1_to_F_n+1},
$Z =\{[\cale \twoheadrightarrow \calb_n ] \in F_n \mid \calb_n \not \simeq \calo_C/\fm^n\}$ is the image of the exceptional divisor of $\pr_{n} : F_{n-1,n}  \to F_n$.
Since the discrepancy of the exceptional divisor of $\pr_{n} : F_{n-1,n}  \to F_n$ is zero by  (2) of this proposition,
$Z$ is contained in the singular locus of $F_n$.
On the other hand, $F_n $ is smooth at $[\cale \twoheadrightarrow \calb_n ] $ if $\calb_n  \simeq \calo_C/\fm^n$ by \cite[Lemma 3.3]{MR4124833}.
Thus $Z$ is  the singular locus of $F_n$.
Since $Z$ is the image of the exceptional divisor of the divisorial contraction $\pr_{n+1}$, $Z$ is irreducible.
By \cite[\S 5]{Birkar:2024aa}, $\codim_{F_n} Z =2$.
\end{proof}

\section{Divisor class groups}\label{sec_div_class/group}

In this section, let $F_n=\Quot_{C}^n (\cale)_q$ be the punctual Quot scheme with $r =\rank \cale \geq 2$ as in the previous section.

\begin{prop}\label{prop_div_class_group}
There exists a prime divisor $H \subset F_n$ such that the divisor class group $\Cl(F_n) $ is generated by the class $[H]$ and 
$n H \sim \calo_{F_n(1)} =\calo_{\Gr(\cale/\fm^n \cale,n) }(1)|_{F_n} $.
\end{prop}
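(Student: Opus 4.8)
The plan is to prove the statement by induction on $n$, run in parallel with a description of $\Cl(F_n)$ and of the polarization $\calo_{F_n}(1)$, using the two fibrations of \autoref{lem_F_n,n+1=P} and \autoref{lem_fiber_F_n,n+1_to_F_n+1}: the $\P^{r-1}$-bundle $f_n=\pr_n\colon F_{n,n+1}\to F_n$ and the divisorial contraction $\pr_{n+1}\colon F_{n,n+1}\to F_{n+1}$. Write $U_n=\{[\cale\twoheadrightarrow\calb_n]\in F_n\mid \calb_n\simeq\calo_C/\fm^n\}$ for the cyclic locus, which by \autoref{prop_Picard_number}(4) is the smooth locus of $F_n$ with complement of codimension two. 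On $U_n$ the pushforward $L_n\coloneqq\overline{\mathscr{B}_n/\fm\mathscr{B}_n}$ is a line bundle, and I take $H$ to represent $h_n\coloneqq[L_n]\in\Cl(U_n)=\Cl(F_n)$. Two general facts are used as black boxes: for the $\P^{r-1}$-bundle $f_n$ over the normal variety $F_n$ one has $\Cl(F_{n,n+1})=f_n^{*}\Cl(F_n)\oplus\Z[\calo_{f_n}(1)]$; and for the birational contraction $\pr_{n+1}$ between normal varieties, with irreducible exceptional divisor $E=\pr_{n+1}^{-1}(F_{n+1}\setminus U)$ (irreducible by \autoref{lem_fiber_F_n,n+1_to_F_n+1} and \autoref{prop_Picard_number}), there is an exact sequence $\Z[E]\to\Cl(F_{n,n+1})\xrightarrow{(\pr_{n+1})_*}\Cl(F_{n+1})\to 0$, so $\Cl(F_{n+1})=\Cl(F_{n,n+1})/\Z[E]$.

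First I record the relation $\calo_{F_n}(1)=n\,h_n$, valid for every $n$ independently of the induction. Over $U_n$ the sheaf $\mathscr{B}_n$ is fibrewise $\calo_C/\fm^n$, so the $\fm$-adic filtration $\mathscr{B}_n\supset\fm\mathscr{B}_n\supset\cdots\supset\fm^n\mathscr{B}_n=0$ has line-bundle graded pieces $\overline{\fm^i\mathscr{B}_n/\fm^{i+1}\mathscr{B}_n}$, and multiplication by a uniformizer $t$ identifies each with $L_n$. Since $\det\overline{\mathscr{B}_n}=\calo_{F_n}(1)$ by \autoref{rem_embedding_to_Grassmannian} and the determinant is multiplicative in filtrations, this gives $\calo_{F_n}(1)|_{U_n}=L_n^{\otimes n}$, hence $\calo_{F_n}(1)=n\,h_n$ in $\Cl(F_n)=\Cl(U_n)$.

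The induction starts from $F_1=\P^{r-1}$, where $h_1=[\calo(1)]$ generates $\Cl(F_1)$ and is a hyperplane. For the step, assume $\Cl(F_n)=\Z h_n$ with $h_n$ represented by a prime divisor $H_n$. The projective-bundle formula gives $\Cl(F_{n,n+1})=\Z\,\xi\oplus\Z\,f_n^{*}h_n$ with $\xi=[\calo_{f_n}(1)]$, and the crucial computation is the class of $E$. Over $U_n$ the bundle $\mathscr{A}_n/\fm\mathscr{A}_n$ contains a distinguished cyclic sub-line-bundle $\calv_0$, the image of $\fm^n\cale$ (the line spanned by $\overline{t^ne_1}$ in a local cyclic frame); a point of the fibre $\P(\mathscr{A}_n/\fm\mathscr{A}_n)$ lies in $E$ exactly when the tautological quotient kills $\calv_0$, so over $f_n^{-1}(U_n)$ the divisor $E$ is the zero locus of the induced section of $\calo_{f_n}(1)\otimes f_n^{*}\calv_0^{\vee}$, whence $[E]=\xi-f_n^{*}[\calv_0]$. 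A snake-lemma argument applied to multiplication by $t^n$ on the universal sequence identifies $\calv_0$ with $L_n$ over $U_n$, so $[\calv_0]=h_n$ and $[E]=\xi-f_n^{*}h_n$ on all of $F_{n,n+1}$ (extending across the codimension-two complement).

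Consequently $[E]$ is primitive, so $\Cl(F_{n+1})=\Cl(F_{n,n+1})/\Z[E]=\Z\cdot\overline{f_n^{*}h_n}$ is free of rank one, where $\overline{(\cdot)}$ denotes $(\pr_{n+1})_*$. Pushing forward the identity $\pr_{n+1}^{*}\calo_{F_{n+1}}(1)=\calo_{f_n}(1)\otimes f_n^{*}\calo_{F_n}(1)=[E]+(n+1)f_n^{*}h_n$ (from \autoref{prop_Picard_number} together with $\calo_{F_n}(1)=n h_n$ and $\xi=[E]+f_n^{*}h_n$) yields $\calo_{F_{n+1}}(1)=(n+1)\,\overline{f_n^{*}h_n}$; comparing with $\calo_{F_{n+1}}(1)=(n+1)h_{n+1}$ and using freeness gives $\overline{f_n^{*}h_n}=h_{n+1}$. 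Thus $h_{n+1}$ generates $\Cl(F_{n+1})$ with $(n+1)h_{n+1}\sim\calo_{F_{n+1}}(1)$, and $H_{n+1}\coloneqq\overline{\pr_{n+1}(f_n^{-1}(H_n))}$ is a prime divisor of class $h_{n+1}$, being the birational image of the prime divisor $f_n^{-1}(H_n)$, which is distinct from the contracted divisor $E$ since $f_n^{*}h_n\neq\xi-f_n^{*}h_n$. This closes the induction. The main obstacle is precisely the identification of $[E]$: everything else is formal once one knows that $E$ restricts fibrewise to the hyperplane where the tautological quotient kills the cyclic direction, and that this cyclic line bundle $\calv_0$ is exactly $L_n$. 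This is where the local module computation over $\calo_{C,q}$ does the real work; the discrepancy-zero relation $K_{F_{n,n+1}}=\pr_{n+1}^{*}K_{F_n}$ by itself only recovers the Fano index and does not pin down the coefficient of $f_n^{*}h_n$ in $[E]$.
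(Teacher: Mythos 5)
Your proposal is correct, and it takes a genuinely different route from the paper's proof. The paper argues directly, without induction: it reduces to $\cale=\calo_C^{\oplus r}$ (the isomorphism class of $F_n$ depends only on $r$ and $n$), covers the smooth locus by the explicit affine charts $U_1,\dots,U_r\simeq \A^{n(r-1)}$ of \cite{Birkar:2024aa}, takes $H=\overline{U_1\setminus U_2}$, and computes Pl\"ucker coordinates ($p_{1,\dots,n}|_{U_1}=1$, $p_{n+1,\dots,2n}|_{U_1}=\det A_2=a_{2,0}^n$) to obtain $nH\sim\calo_{F_n}(1)$; generation of $\Cl(F_n)$ then follows from the excision sequence $\Z[H]\to\Cl(U_1\cup U_2)\to\Cl(U_2)=0$ combined with $\Cl(F_n)=\Cl(U_1\cup U_2)$. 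You instead run an induction through the correspondence $F_{n,n+1}$, combining the class-group projective-bundle formula with excision for the divisorial contraction $\pr_{n+1}$, and the real content is your identification $[E]=\xi-f_n^{*}h_n$. I checked that this is sound: a length-$(n+1)$ quotient $\calb_{n+1}$ is cyclic if and only if $\fm^n\calb_{n+1}\neq 0$, if and only if the tautological quotient does not kill the image $\calv_0$ of $\fm^n\cale$ in $\cala_n/\fm\cala_n$; over $f_n^{-1}(U_n)$ the zero locus of the resulting section of $\calo_{f_n}(1)\otimes f_n^{*}\calv_0^{\vee}$ is the projective subbundle $\P_{U_n}((\overline{\mathscr{A}_n/\fm\mathscr{A}_n})|_{U_n}/\calv_0)$, which is reduced and irreducible, so both the class computation and the irreducibility of $E$ are legitimate; and multiplication by $t^n$ does give a surjection $\mathscr{B}_n/\fm\mathscr{B}_n\to\calv_0$ of line bundles over $U_n$, hence $[\calv_0]=h_n$. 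You also correctly handle the point that a generator of $\Cl(F_{n+1})\simeq\Z$ need not a priori be the class of a prime divisor, by transporting $H_n$ through the correspondence. Two small remarks: the identity $\pr_{n+1}^{*}\calo_{F_{n+1}}(1)=\calo_{f_n}(1)\otimes f_n^{*}\calo_{F_n}(1)$ is established inside the proof of \autoref{prop_Picard_number}, not in its statement, so you should either cite that proof or rederive it by pushing forward \ref{eq_O(1)_A'_B_n}; and your two black boxes ($\Cl$ of a projective bundle over a normal base, excision under a birational contraction) are indeed standard. As for what each approach buys: the paper's proof is shorter and completely explicit once the charts of \cite{Birkar:2024aa} are granted, but requires trivializing $\cale$ and choosing coordinates; yours is intrinsic (no reduction to trivial $\cale$), identifies the generator canonically as $h_n=[\overline{\mathscr{B}_n/\fm\mathscr{B}_n}]$ with $\calo_{F_n}(1)=nh_n$ coming from the $\fm$-adic filtration of $\overline{\mathscr{B}_n}$, fits the inductive framework of \autoref{sec_picard_group}, and yields the extra dividends $[E]=\xi-f_n^{*}h_n$ and $(\pr_{n+1})_{*}f_n^{*}h_n=h_{n+1}$, which the paper's argument does not provide.
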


\begin{proof}
If $n=1$, $F_1 \simeq \P^{r-1}$ and hence we can take $H \sim \calo_{\P^{r-1}} (1)$.

Let $n\geq 2$.
We may assume that $\cale = V \otimes \calo_{C}$ for $V=k^r$.
Let $e_1,\dots, e_r$ be the standard basis of $V$.

The smooth locus of $F_n$ is $\{[V \otimes \calo_C \twoheadrightarrow \calb_n ] \in F_n \mid \calb_n \simeq \calo_C/\fm^n\}$ by \autoref{prop_Picard_number} (4).
Hence
the smooth locus is covered by open subsets $U_1,\dots, U_r$ defined by
\[
U_i \coloneqq \{[V \otimes \calo_C \stackrel{\beta}{\twoheadrightarrow} \calo_C/\fm^n] \in F_n \mid \text{the image $\beta(e_i)$ is invertible in $\calo_C/\fm^n$} \}
\]
as explained in \cite[\S 5]{Birkar:2024aa}.
Furthermore, each $U_i$ is isomorphic to $\A^{n(r-1)}$.
For example, we have an isomorphism 
$\A^{n(r-1)} \to U_1$ defined by
\begin{align*}
\beta(e_1) &=1, \\
\beta(e_2) &=a_{2,0} + a_{2,1} t + \cdots + a_{2,n-1} t^{n-1},\\ 
&\vdots \\
\beta(e_r) &=a_{r,0} + a_{r,1} t + \cdots + a_{r,n-1} t^{n-1},
\end{align*}
where $t$ is a generator of the ideal $\fm/\fm^n \subset \calo_C/\fm^n$ and 
$a_{i,j}$'s  are the coordinates of $\A^{n(r-1)}$.
Then $U_1 \setminus U_i =(a_{i,0}  =0) \subset U_1= \A^{n(r-1)}$
and hence $ U_1 \setminus U_i  \simeq \A^{n(r-1)-1}$.
Let 
\begin{align*}
H \coloneqq \overline{U_1 \setminus U_2} \subset F_n
\end{align*}
be the closure of $U_1 \setminus U_2$,
which is a prime divisor of $F_n$.

Consider the composite morphism $\A^{n(r-1)} \simeq  U_1  \subset F_n \hookrightarrow \Gr(V \otimes \calo_C/\fm^n, n)$.
Since $V \otimes \calo_C/\fm^n$ has a basis $\{e_i \otimes t^j \mid 1 \leq i \leq r, 0 \leq j \leq n-1\}$
and $\beta (e_i \otimes t^j) =t^j \beta(e_i)$ for $\beta : V \otimes \calo_C \twoheadrightarrow \calo_C/\fm^n$,
the morphism $\A^{n(r-1)} \hookrightarrow \Gr(V \otimes \calo_C/\fm^n, n)$ is described by the matrix of size $n \times nr$
\begin{align*}
\begin{pmatrix}
A_1 & A_2 & \cdots & A_r
\end{pmatrix},
\end{align*}
where $A_1=E_n$ is the  identity matrix of size $n$ and
\begin{align*}
A_i =\begin{pmatrix}
a_{i,0}& 0 & \cdots  & \cdots &0 \\
a_{i,1}& a_{i,0 } &  \ddots & &\vdots \\
a_{i,2}& a_{i,1 } &a_{i,0} &  \ddots &\vdots \\
\vdots &  \vdots  & & \ddots & 0 \\
a_{i,r-1}& a_{i,r-2 } & \cdots &  \cdots& a_{i,0}
\end{pmatrix}
\end{align*}
for $2 \leq i \leq r$.
For the Pl\"ucker coordinates $p_{1,\dots,n}$ and $p_{n+1,\dots,2n}$ on $\Gr(V \otimes \calo_C/\fm^n, n)$,
we have
\begin{align*}
p_{1,\dots,n}|_{U_1} =\det A_1=1, \quad p_{n+1,\dots,2n}|_{U_1} =\det A_2 = a_{2,0}^n.
\end{align*}
Hence it holds that
\begin{align*}
\Div (p_{1,\dots,n} )|_{U_1} =0, \quad \Div (p_{n+1,\dots,2n})|_{U_1} = nH|_{U_1}.
\end{align*}
By symmetry, we have
$\Div (p_{n+1,\dots,2n})|_{U_2} = 0$. Since $U_2 \cap H =U_2 \cap \overline{U_1\setminus U_2}=\emptyset $,
it holds that $ \Div (p_{n+1,\dots,2n})|_{U_1 \cup U_2} = nH|_{U_1 \cup U_2}$.

Recall that the singular locus $F_n \setminus (U_1 \cup \cdots \cup U_r)$ has codimension two in $F_n$.
For $i \geq 3 $, $U_i \setminus (U_1 \cup U_2) $ is isomorphic to $\A^{n(r-1)-2}$ and hence 
$(U_1 \cup \cdots \cup U_r) \setminus (U_1 \cup U_2)$ has codimension two in $U_1 \cup \cdots \cup U_r$.
Thus $ F_n \setminus (U_1 \cup U_2)$ has codimension two in $F_n$
and hence $\Cl(F_n) = \Cl( U_1 \cup U_2)$.
Since $U_1 \cap H= U_1 \setminus U_2$ and $U_2 \cap H =\emptyset $, we have
$(U_1 \cup U_2) \setminus H =  U_2$.
Then there exists an exact sequence 
\begin{align*}
\Z [H|_{U_1 \cup U_2 }] \to \Cl(U_1 \cup U_2 ) \to \Cl(U_2) \to 0.
\end{align*}
Since $ \Cl(U_2) \simeq \Cl(\A^{n(r-1)}) =0$,
it holds that $\Cl(F_n) = \Cl(U_1 \cup U_2 ) = \Z[H]$.
Since $ \Div (p_{n+1,\dots,2n})|_{U_1 \cup U_2} = nH|_{U_1 \cup U_2}$ and $ \codim_{F_n} (U_1 \cup U_2) =2$,
it holds that $n H = \Div (p_{n+1,\dots,2n})  \sim \calo_{F_n}(1)$.
\end{proof}

\begin{proof}[Proof of \autoref{thm_Picard_number_one}]
(1)-(4) follow from Propositions \ref{lem_embedding_to_Grassmannian} and \ref{prop_Picard_number}.
(5) is nothing but \autoref{prop_div_class_group}.
\end{proof}

\section{The case $r=2$}\label{sec_r=2}

We use the notation in \S \ref{sec_picard_group}.
The purpose of this section is to give a description of the exceptional divisor of $\pr_{n+1} : F_{n,n+1} \to F_{n+1}$ for $r=2$.
Throughout this section, we assume $r=\rank \cale=2$ and hence $ \dim F_n= n(r-1) =n$.

\begin{lem}\label{lem_F_{n-1}_in_F_{n+1}}
For $n \geq 1$, there exists a natural embedding 
\begin{align}\label{eq_embedding_F_n-1_F_n+1}
F_{n-1} \hookrightarrow F_{n+1} \ : \ [\cala_{n-1}] \mapsto [\fm \cala_{n-1}].
\end{align}
\end{lem}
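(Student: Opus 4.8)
The plan is to construct the map \ref{eq_embedding_F_n-1_F_n+1} functorially and then prove it is a closed immersion by showing it is a proper monomorphism.

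First I would produce the morphism via the functor of points. A morphism $T \to F_{n-1}$ corresponds to an exact sequence $0 \to \mathscr{A} \to p_C^* \cale \to \mathscr{B} \to 0$ on $C \times T$ with $\mathscr{A}$ locally free of rank $2$, $\det \mathscr{A} = p_C^* \fm^{n-1} \det \cale$, and $\mathscr{B}$ flat of length $n-1$ over $T$ supported on $\{q\} \times T$. Set $\mathscr{A}' \coloneqq p_C^* \fm \cdot \mathscr{A} \subseteq p_C^* \cale$. Since $p_C^* \fm = \calo_{C \times T}(-\{q\} \times T)$ is invertible, $\mathscr{A}' \simeq p_C^* \fm \otimes \mathscr{A}$ is locally free of rank $2$ with
\[
\det \mathscr{A}' = (p_C^* \fm)^{\otimes 2} \otimes \det \mathscr{A} = p_C^* \fm^{n+1} \det \cale .
\]
From the exact sequence $0 \to \mathscr{A}/\mathscr{A}' \to p_C^* \cale/\mathscr{A}' \to \mathscr{B} \to 0$, together with the fact that $\mathscr{A}/\mathscr{A}' = \mathscr{A}|_{\{q\} \times T}$ is locally free of rank $2$ over $\calo_T$, the quotient $p_C^* \cale/\mathscr{A}'$ is flat of length $(n-1) + 2 = n+1$ over $T$ and supported on $\{q\} \times T$. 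Hence $\mathscr{A}' \subseteq p_C^* \cale$ defines a $T$-point of $\Quot_C^{n+1}(\cale)$, and by the determinant computation it factors through $F_{n+1}$. This assignment is functorial in $T$, so by Yoneda it yields a morphism $F_{n-1} \to F_{n+1}$ sending $[\cala_{n-1}]$ to $[\fm \cala_{n-1}]$.

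Next I would show this morphism is a monomorphism by recovering $\mathscr{A}$ from $\mathscr{A}'$ through the transporter identity $\mathscr{A} = \{ s \in p_C^* \cale \mid p_C^* \fm \cdot s \subseteq \mathscr{A}' \}$. Locally $p_C^* \fm = (t)$ with $t$ a non-zero-divisor on the locally free sheaf $p_C^* \cale$ (because $\{q\} \times T$ is a relative Cartier divisor), and $\mathscr{A}' = t \mathscr{A}$; then $t s \in t\mathscr{A}$ forces $s \in \mathscr{A}$ since $t$ is a non-zero-divisor. Thus $\mathscr{A}$ is determined by $\mathscr{A}'$ over an arbitrary base, so two $T$-points of $F_{n-1}$ with the same image in $F_{n+1}(T)$ coincide, and the map is a monomorphism.

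Finally, since $F_{n-1}$ is projective, hence proper, and $F_{n+1}$ is separated, the morphism is proper; a proper monomorphism is a closed immersion, so \ref{eq_embedding_F_n-1_F_n+1} is an embedding. The main obstacle I anticipate is the family-theoretic bookkeeping rather than any single hard idea: one must verify that $p_C^* \cale/\mathscr{A}'$ stays flat of length exactly $n+1$ over an arbitrary $T$ (which forces working with the displayed exact sequence instead of with closed points), and that $t$ remains a non-zero-divisor on $p_C^* \cale$ in families, since it is precisely this that upgrades the transporter identity from injectivity on closed points to a genuine monomorphism.
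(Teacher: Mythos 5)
Your proof is correct, and its first half is essentially the paper's: sending a $T$-point $\mathscr{A} \subset p_C^*\cale$ to $p_C^*\fm \cdot \mathscr{A}$, computing $\det(p_C^*\fm \cdot \mathscr{A}) = p_C^*\fm^{n+1}\det\cale$, and getting flatness of length $n+1$ from the sequence $0 \to \mathscr{A}/\fm\mathscr{A} \to p_C^*\cale/\fm\mathscr{A} \to \mathscr{B} \to 0$ (the paper runs this on the universal sequence over $C \times F_{n-1}$, which by Yoneda is the same construction). Where you genuinely diverge is in proving the morphism is an \emph{embedding}. The paper identifies the map as the restriction to $F_{n-1} \subset \Gr(\cale/\fm^{n-1}\cale, n-1)$ of an explicit composition of closed embeddings of Grassmannians, $\Gr(\cale/\fm^{n-1}\cale,n-1) \hookrightarrow \Gr(\cale/\fm^{n}\cale,n-1) \simeq \Gr(\fm\cale/\fm^{n+1}\cale,n-1) \hookrightarrow \Gr(\cale/\fm^{n+1}\cale,n+1)$, the last arrow obtained by passing to subspace Grassmannians. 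You instead argue abstractly: the map is a monomorphism because $\mathscr{A}$ is recovered from $\fm\mathscr{A}$ via the transporter identity (your key verification, that a local generator $t$ of $p_C^*\fm$ remains a non-zero-divisor on $p_C^*\cale$ after arbitrary base change, holds because $p_C$ is flat, so this works for non-reduced $T$ as well), and a proper monomorphism is a closed immersion. Both routes are sound. Yours is more self-contained and avoids checking that the restriction of the Grassmannian maps agrees with the Quot-theoretic morphism; the paper's is more explicit and keeps the embedding visibly compatible with the Grassmannian framework of \autoref{lem_embedding_to_Grassmannian}, which is what makes the comparison with the embedding of Birkar--Jia--Sheshmani in \autoref{rem_Z_n-1_Z_n+1} immediate.
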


\begin{proof}
Let $
0 \to \mathscr{A}_{n-1} \to p_C^* \cale \to \mathscr{B}_{n-1} \to 0
$
be the universal exact sequence 
on $C \times F_{n-1}$.
Then we have an exact sequence
\[
0 \to \mathscr{A}_{n-1}/\fm \mathscr{A}_{n-1} \to p_C^* \cale /\fm \mathscr{A}_{n-1}  \to \mathscr{B}_{n-1} \to 0.
\]
Since $ \mathscr{A}_{n-1}/\fm \mathscr{A}_{n-1}$  and $\mathscr{B}_{n-1}  $ are flat over $F_{n-1}$ of length $2$ and $n-1$ respectively,
$p_C^* \cale /\fm \mathscr{A}_{n-1}$ is flat over $F_{n-1}$ of length $n+1$.
Since $\det \fm \mathscr{A}_{n-1} =\fm^2 \det \mathscr{A}_n = p_C^* \fm^{n+1} \det \cale $,
the exact sequence $0 \to  \fm \mathscr{A}_{n-1} \to  p_C^* \cale   \to p_C^* \cale /\fm \mathscr{A}_{n-1}  \to 0$ induces the morphism \ref{eq_embedding_F_n-1_F_n+1}.

Furthermore, \ref{eq_embedding_F_n-1_F_n+1} is an embedding since it is the restriction to $F_{n-1} \subset  \Gr(\cale/ \fm^{n-1}\cale, n -1) $
of the embedding
\begin{align*}
 \Gr(\cale/ \fm^{n-1}\cale, n -1)  &\hookrightarrow   \Gr(\cale/ \fm^{n}\cale, n-1 )  \\
 &\simeq \Gr(\fm \cale/ \fm^{n+1}\cale, n-1 ) \hookrightarrow \Gr( \cale/ \fm^{n+1}\cale, n+1),
\end{align*}
induced by the surjection $\cale/ \fm^{n}\cale \to \cale/ \fm^{n-1}\cale $ and an isomorphism  $\cale/ \fm^{n}\cale \simeq \fm \cale/ \fm^{n+1}\cale \subset \cale/ \fm^{n+1}\cale$.
The last embedding is obtained as $\Gr(\fm \cale/ \fm^{n+1}\cale, n-1 ) = \Gr(n+1, \fm \cale/ \fm^{n+1}\cale)  \subset \Gr(n+1, \cale/ \fm^{n+1}\cale) = \Gr( \cale/ \fm^{n+1}\cale, n+1)$.
\end{proof}

\begin{rem}\label{rem_Z_n-1_Z_n+1}
We can check that the embedding \ref{eq_embedding_F_n-1_F_n+1} is the same as the one constructed in \cite[\S 6.4, Proposition 9.1]{Birkar:2024aa}.
\end{rem}

\begin{lem}\label{lem_F_n-1,n_to_F_n,n+1}
For $n \geq 1$, the embedding \ref{eq_embedding_F_n-1_F_n+1} induces an embedding
\begin{align}\label{eq_embedding_incidence}
F_{n-1,n} \hookrightarrow F_{n,n+1}  \ : \ ([\cala_{n-1}],[\cala_n]) \mapsto ([\cala_n], [\fm \cala_{n-1}]).
\end{align}
\end{lem}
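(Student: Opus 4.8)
The plan is to obtain \ref{eq_embedding_incidence} as the restriction of an already-established closed embedding, so that essentially no new scheme-theoretic input is required beyond \autoref{lem_F_{n-1}_in_F_{n+1}}. Recall from \autoref{lem_F_n,n+1=P} that $F_{n-1,n}$ and $F_{n,n+1}$ are the reduced incidence loci
\[
F_{n-1,n}=\{([\cala_{n-1}],[\cala_n]) : \cala_n \subset \cala_{n-1} \subset \cale\}, \qquad F_{n,n+1}=\{([\cala_n],[\cala_{n+1}]) : \cala_{n+1} \subset \cala_n \subset \cale\},
\]
sitting as closed subschemes of $F_{n-1}\times F_n$ and $F_n \times F_{n+1}$ respectively.

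First I would assemble the ambient map at the level of products. Writing $\iota : F_{n-1} \hookrightarrow F_{n+1}$, $[\cala_{n-1}] \mapsto [\fm \cala_{n-1}]$, for the closed embedding of \autoref{lem_F_{n-1}_in_F_{n+1}}, I consider the composite
\[
F_{n-1} \times F_n \xrightarrow{\iota \times \id_{F_n}} F_{n+1} \times F_n \xrightarrow{\sigma} F_n \times F_{n+1},
\]
where $\sigma$ swaps the two factors. Since $\iota \times \id_{F_n}$ is a closed embedding (a product of a closed embedding with the identity) and $\sigma$ is an isomorphism, the composite $\Phi$ is a closed embedding sending $([\cala_{n-1}],[\cala_n])$ to $([\cala_n],[\fm\cala_{n-1}])$. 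Restricting $\Phi$ to the closed subscheme $F_{n-1,n}\subset F_{n-1}\times F_n$ then produces a closed embedding $\psi : F_{n-1,n} \hookrightarrow F_n \times F_{n+1}$, whose rule on points is precisely \ref{eq_embedding_incidence}.

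It then remains to check that $\psi$ factors through $F_{n,n+1}$, which is the only genuinely new verification. Set-theoretically: for $([\cala_{n-1}],[\cala_n]) \in F_{n-1,n}$ the quotient $\cala_{n-1}/\cala_n$ has length one and is supported at $q$, so $\fm$ annihilates it and $\fm\cala_{n-1} \subset \cala_n \subset \cale$; combined with $[\fm\cala_{n-1}] \in F_{n+1}$ from \autoref{lem_F_{n-1}_in_F_{n+1}}, every image point lies in the locus underlying $F_{n,n+1}$. Because $F_{n-1,n}$ is reduced and $F_{n,n+1}$ carries the reduced structure on that locus, $\psi$ factors uniquely through a morphism $F_{n-1,n} \to F_{n,n+1}$; and since $\psi$ is a closed embedding while $F_{n,n+1} \hookrightarrow F_n\times F_{n+1}$ is a closed immersion (hence separated), this factored morphism is itself a closed embedding.

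I expect no serious obstacle here: the real content, namely that $[\fm\cala_{n-1}]$ defines a point of $F_{n+1}$ functorially in families, has already been extracted in \autoref{lem_F_{n-1}_in_F_{n+1}}. The only delicate step is upgrading the pointwise containment $\fm\cala_{n-1}\subset\cala_n$ to a scheme-theoretic factorization, which I would handle by invoking reducedness of both source and target. If one preferred to avoid the reduced-structure shortcut, one could instead twist the universal sequence on $C\times F_{n-1,n}$ by $\fm$ and repeat the flatness and determinant computation of \autoref{lem_F_{n-1}_in_F_{n+1}} directly, but this is heavier and unnecessary.
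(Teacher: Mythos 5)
Your proposal is correct and follows essentially the same route as the paper: both form the product embedding $F_{n-1}\times F_n \hookrightarrow F_n\times F_{n+1}$ from \autoref{lem_F_{n-1}_in_F_{n+1}} and then observe that for $([\cala_{n-1}],[\cala_n])\in F_{n-1,n}$ the length-one quotient $\cala_{n-1}/\cala_n\simeq\calo_C/\fm$ is killed by $\fm$, so $\fm\cala_{n-1}\subset\cala_n$ and the image lands in $F_{n,n+1}$. Your explicit use of reducedness to upgrade the set-theoretic containment to a scheme-theoretic factorization is a detail the paper leaves implicit (it is justified since $F_{n,n+1}$ carries the reduced structure by \autoref{lem_F_n,n+1=P}), but it is not a different argument.
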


\begin{proof}
The embedding \ref{eq_embedding_F_n-1_F_n+1} induces an embedding $ F_{n-1} \times F_n \to F_n \times F_{n+1} : ([\cala_{n-1}],[\cala_n]) \mapsto ([\cala_n], [\fm \cala_{n-1}])$.
If  $([\cala_{n-1}],[\cala_n])  \in F_{n-1,n}$,
it holds that $\cala_{n-1}/\cala_n \simeq \calo_C/\fm$ and hence $ \fm \cala_{n-1} \subset \cala_n$.
Thus $([\cala_n], [\fm \cala_{n-1}])$ is contained in $F_{n,n+1}$.
\end{proof}

The following proposition shows that  the exceptional divisor of $\pr_{n+1} : F_{n,n+1} \to F_{n+1}$ is a $\P^1$-bundle over $F_{n-1} \subset F_{n+1}$.

\begin{prop}\label{prop_exceptional_divisor_r=2}
If $n \geq 1$, $F_{n-1,n} $ embedded in $F_{n,n+1}$ by \ref{eq_embedding_incidence} is the exceptional divisor of $\pr_{n+1} : F_{n,n+1} \to F_{n+1}$.
The restriction  $\pr_{n+1} |_{F_{n-1,n} } : F_{n-1,n} \to F_{n+1}$ coincides with the $\P^1$-bundle  $\pr_{n-1} : F_{n-1,n} \to F_{n-1} \subset F_{n+1}$.
\end{prop}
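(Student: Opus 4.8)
The plan is to recognise the image of \ref{eq_embedding_incidence} as the exceptional locus of $\pr_{n+1}$ and then to read off the restricted morphism directly from the definitions. By \autoref{lem_fiber_F_n,n+1_to_F_n+1}, $\pr_{n+1}$ is an isomorphism over $U$ and has positive-dimensional fibres over $F_{n+1}\setminus U$, so its exceptional locus is exactly $E \coloneqq \pr_{n+1}^{-1}(F_{n+1}\setminus U)$; moreover, by \autoref{prop_Picard_number} (4) applied with index $n+1$, the set $F_{n+1}\setminus U$ is the singular locus of $F_{n+1}$, irreducible of codimension two. Since $\pr_{n+1}$ is a divisorial contraction of a $\Q$-factorial variety of Picard number two onto one of Picard number one (\autoref{prop_Picard_number} (3)), the number of contracted prime divisors equals the drop in Picard number, so $E$ is a single prime divisor; as $\dim F_{n,n+1}=n+1$, it has dimension $n$.

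First I would check that the image of \ref{eq_embedding_incidence} is contained in $E$. A point $([\cala_{n-1}],[\cala_n])\in F_{n-1,n}$ is sent to $([\cala_n],[\fm\cala_{n-1}])$, whose image under $\pr_{n+1}$ is $[\fm\cala_{n-1}]\in F_{n+1}$. Writing $\calb_{n+1}=\cale/\fm\cala_{n-1}$, the inclusions $\fm\cala_{n-1}\subset\cala_{n-1}\subset\cale$ exhibit $\cala_{n-1}/\fm\cala_{n-1}\simeq\cala_{n-1}\otimes k(q)\simeq k^2$ as a submodule of $\calb_{n+1}$ annihilated by $\fm$; hence the submodule $\calb'_{n+1}=\{b\in\calb_{n+1}\mid \fm b=0\}$ has dimension at least two. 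This is where $r=2$ enters: the quotient $\calo_C/\fm^{n+1}$ has one-dimensional $\fm$-torsion, so $\calb_{n+1}\not\simeq\calo_C/\fm^{n+1}$ and $[\fm\cala_{n-1}]\in F_{n+1}\setminus U$. Therefore $([\cala_n],[\fm\cala_{n-1}])\in E$.

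Next I would upgrade this inclusion to an equality. The embedding \ref{eq_embedding_incidence} is closed and $F_{n-1,n}$ is an irreducible variety of dimension $n$ (a $\P^1$-bundle over $F_{n-1}$ by \autoref{lem_F_n,n+1=P}), so its image is a reduced irreducible closed subvariety of dimension $n$ contained in the prime divisor $E$, which also has dimension $n$. Two irreducible closed subsets of equal dimension, one contained in the other, coincide; since the image is reduced it equals $E$ as a reduced divisor, and the closed immersion $F_{n-1,n}\hookrightarrow F_{n,n+1}$ identifies $F_{n-1,n}$ with $E$. This proves the first assertion.

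Finally, the restricted morphism is computed by the chase already begun: $\pr_{n+1}$ sends $([\cala_{n-1}],[\cala_n])\in F_{n-1,n}$ to $[\fm\cala_{n-1}]$, which is exactly the image of $[\cala_{n-1}]=\pr_{n-1}([\cala_{n-1}],[\cala_n])$ under the embedding \ref{eq_embedding_F_n-1_F_n+1}. Thus $\pr_{n+1}|_{F_{n-1,n}}$ factors as $F_{n-1,n}\xrightarrow{\pr_{n-1}}F_{n-1}\hookrightarrow F_{n+1}$, and $\pr_{n-1}$ is a $\P^1$-bundle by \autoref{lem_F_n,n+1=P}. I expect the main obstacle to be the socle computation of the second paragraph—pinning down that the image lands in the singular locus and using $r=2$ to identify that locus—together with the irreducibility of $E$ needed to close the dimension count; the remaining steps are formal diagram chases.
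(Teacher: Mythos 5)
Your proof is correct, and its overall skeleton matches the paper's: identify the exceptional locus of $\pr_{n+1}$ as a single prime divisor of dimension $n$ (using that $\pr_{n+1}$ is a divisorial contraction between $\Q$-factorial varieties, \autoref{prop_Picard_number}), show that the closed irreducible $n$-dimensional image of \ref{eq_embedding_incidence} is contained in it, conclude equality by the dimension count, and read the restricted morphism off the definitions of the two embeddings. The one genuine difference is how containment in the exceptional locus is established. The paper proves your \emph{last} paragraph first: since $\pr_{n+1}$ sends $([\cala_{n-1}],[\cala_n])$ to $[\fm\cala_{n-1}]$, which is the image of $[\cala_{n-1}]$ under \ref{eq_embedding_F_n-1_F_n+1}, the restriction $\pr_{n+1}|_{F_{n-1,n}}$ factors through the $(n-1)$-dimensional variety $F_{n-1}$; hence every fiber of the restriction is positive-dimensional, so $F_{n-1,n}$ is contracted and lies in the exceptional locus with no further computation. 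You instead prove containment by the socle argument: $\cala_{n-1}/\fm\cala_{n-1}\simeq k^2$ injects into $\calb_{n+1}=\cale/\fm\cala_{n-1}$ and is killed by $\fm$, so $\calb_{n+1}\not\simeq\calo_C/\fm^{n+1}$, i.e.\ the image lies over $F_{n+1}\setminus U$. This is valid and yields a small bonus --- it makes explicit that the exceptional divisor sits over the singular locus $F_{n+1}\setminus U$ of \autoref{prop_Picard_number}(4) --- but it is more work than needed, since the factorization through $F_{n-1}$, which you prove anyway at the end, already gives the containment for free.
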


\begin{proof}
By \ref{eq_embedding_incidence},  $\pr_{n+1}$ maps $([\cala_{n-1}],[\cala_n])  \in F_{n-1,n}$ to $[\fm \cala_{n-1}] \in F_{n+1}$,
which is regarded as $[\cala_{n-1}] \in F_{n-1} \subset F_{n+1}$ under the embedding \ref{eq_embedding_F_n-1_F_n+1}.
Hence the restriction  $\pr_{n+1} |_{F_{n-1,n} } $ coincides with $\pr_{n-1} : F_{n-1,n} \to F_{n-1} \subset F_{n+1}$.
Since $\dim F_{n-1,n} =n =\dim  F_{n,n+1} -1$,
$F_{n-1,n}$ is the exceptional divisor of $\pr_{n,n+1}$.
\end{proof}

\bibliographystyle{amsalpha}
\newcommand{\etalchar}[1]{$^{#1}$}
\providecommand{\bysame}{\leavevmode\hbox to3em{\hrulefill}\thinspace}
\providecommand{\MR}{\relax\ifhmode\unskip\space\fi MR }
\providecommand{\MRhref}[2]{%
  \href{http://www.ams.org/mathscinet-getitem?mr=#1}{#2}
}
\providecommand{\href}[2]{#2}

\end{document}